\newcommand{\CC}{\mathbb{C}}
\newcommand{\FF}{\mathbb{F}}
\newcommand{\QQ}{\mathbb{Q}}
\newcommand{\ZZ}{\mathbb{Z}}
\newcommand{\SageMath}{\textsc{SageMath}}
\newtheorem{theorem}{Theorem}[section]
\newtheorem{lemma}[theorem]{Lemma}
\newtheorem{corollary}[theorem]{Corollary}
\theoremstyle{definition}
\newtheorem{remark}[theorem]{Remark}
\numberwithin{equation}{theorem}
\DeclareMathOperator{\Hom}{Hom}
\DeclareMathOperator{\ord}{ord}
\DeclareMathOperator{\Res}{Res}
\newcommand{\arXiv}[3]{\href{https://arxiv.org/abs/#1}{arXiv:#1v#2} (#3)}
\begin{document}

\title{Geometric decomposition of abelian varieties of order $1$}
\author{Toren D'Nelly-Warady}
\author{Kiran S. Kedlaya}
\thanks{D'Nelly-Warady was supported by the UC LEADS program. Kedlaya was supported by NSF (grants DMS-1802161, DMS-2053473) and UC San Diego (Warschawski Professorship).}

\begin{abstract}
Since the 1970s, the complete classification (up to isogeny) of abelian varieties over finite fields with trivial group of rational points has been known from results of Madan--Pal and Robinson; with two exceptions these are all defined over $\FF_2$. We determine the decomposition of these varieties into simple factors over an algebraic closure of $\FF_2$;
this requires solving a polynomial equation in three roots of unity.
\end{abstract}

\maketitle

\section{Introduction}

We say that an abelian variety $A$ over a finite field $\FF_q$ has \emph{order $1$} if $\#A(\FF_q) = 1$. The simple abelian varieties of order 1 were studied by Madan--Pal \cite{madan-pal}, who showed that there are none for $q \geq 5$, one isogeny class (of genus $1$) for each of $q=3$ and $q=4$, and infinitely many isogeny classes for $q=2$.
More precisely, Madan--Pal produced a family of abelian varieties\footnote{Note that the $A_n$ are only specified by Madan--Pal up to isogeny, not up to isomorphism.} $A_n$ over $\FF_2$ of order 1
with the property that every simple abelian variety over $\FF_2$ of order 1 occurs as an isogeny factor of $A_n$ for exactly one value of $n$.
They also gave a partial analysis of the isogeny factors of the $A_n$, which was completed by Robinson \cite{robinson}. We recall some more details in Theorem~\ref{T:madan-pal}.

The main result of this paper is to determine how the simple abelian varieties over $\FF_2$ of order 1
decompose into isogeny factors after base extension to an algebraic closure $\overline{\FF}_2$ of $\FF_2$. (See \S\ref{sec:geometric decompositions} for the proof.)

\begin{theorem} \label{T:main}
Let $A$ be a simple abelian variety over $\FF_2$ of order $1$.
Let $n$ be the unique positive integer for which $A$ is an isogeny factor of the Madan--Pal variety $A_n$.
Then $A_{\overline{\FF}_2}$ is isogenous to $B^f$ for some simple abelian variety $B$ over $\overline{\FF}_2$, where
\[
f = \begin{cases} 
1 & \mbox{$n$ is a power of $2$ and $n \neq 4$} \\
2 & \mbox{$n$ is not a power of $2$ and $n \neq 7, 30$} \\
2 & n= 4 \\
 3 & n=7 \\
4 & n = 30.
\end{cases}
\]
\end{theorem}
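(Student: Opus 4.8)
The plan is to translate the geometric decomposition of $A$ into an integrality statement about Weil numbers, to settle most cases by a Newton polygon computation at $2$, and to reduce the remaining cases to an explicit cyclotomic equation. Fix notation: let $\pi$ be the Weil number of $A$ and $h_n$ its minimal polynomial, so that $P_A(T)=h_n(T)^{e_A}$ for some integer $e_A\geq 1$; the polynomials $h_n$, the exponents $e_A$, and the matching of $A$ with $A_n$ are all read off from Theorem~\ref{T:madan-pal} (and double-checked in \SageMath\ where convenient). Base change to $\FF_{2^m}$ replaces each root $\alpha$ of $h_n$ by $\alpha^m$; choosing $m$ divisible by every finite multiplicative order that occurs, two roots $\alpha,\beta$ satisfy $\alpha^m=\beta^m$ exactly when $\alpha/\beta$ is a root of unity, and for such $m$ the decomposition of $A_{\FF_{2^m}}$ already agrees with that of $A_{\overline{\FF}_2}$. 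Since $h_n$ is irreducible its roots form a single Galois orbit, so the distinct $m$th powers again form a single Galois orbit in which each value is attained the same number $k$ of times, where $k$ is the size of the equivalence class of $\pi$ under $\alpha\sim\beta\iff\alpha/\beta$ a root of unity. In particular $A_{\overline{\FF}_2}$ is automatically isotypic, $A_{\overline{\FF}_2}\sim B^f$ with $B$ geometrically simple; comparing characteristic polynomials, with $P_B(T)=m_{\pi^m}(T)^{e_B}$ and $m_{\pi^m}$ the minimal polynomial of $\pi^m$, gives $e_B f=e_A k$, and $e_B$ is computed from the Newton slopes of $\pi^m$ and its behaviour at the infinite places via Honda--Tate.

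Everything then rests on computing $k$, i.e.\ on deciding for which embeddings $\sigma$ of $\QQ(\pi)$ the ratio $\sigma(\pi)/\pi$ is a root of unity. The key observation is that every Galois conjugate of $\sigma(\pi)/\pi$ has absolute value $1$, each root of $h_n$ being a Weil number of absolute value $\sqrt2$; so by Kronecker's theorem $\sigma(\pi)/\pi$ is a root of unity if and only if it is an algebraic integer, i.e.\ if and only if $\pi$ divides $\sigma(\pi)$ in the relevant ring of integers. Since $\pi\bar\pi=2$, this is a condition on the splitting of $2$ in $\QQ(\pi)$ and on how the slopes of $h_n$ are distributed among the primes above $2$, and for all $n$ outside a short list a direct Newton polygon computation shows that the only contributing $\sigma$ are the identity and, when it contributes, complex conjugation; together with the values of $e_A$ and $e_B$ this yields $f=1$ when $n$ is a power of $2$ with $n\neq4$, and $f=2$ otherwise.

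The substance is the exceptional list $n\in\{4,7,30\}$, where $\QQ(\pi)$ carries extra automorphisms $\sigma$ for which $\sigma(\pi)/\pi$ has very small height and it is genuinely delicate whether it is a root of unity. Here I would use the explicit form of $h_n$ to rewrite the putative identity $\sigma(\pi)/\pi=\zeta$ in terms of the traces $\alpha+\bar\alpha$ (the roots of the real Weil polynomial $h_n^+$) together with the relation $\pi\bar\pi=2$; after clearing denominators this becomes a polynomial equation satisfied by three roots of unity. Solving it via the classification of vanishing sums of roots of unity (Mann; Conway--Jones; Poonen--Rubinstein) simultaneously rules out any spurious collapses and, for $n=4,7,30$, exhibits the genuine additional ones, so that $k$ and hence $f=2,3,4$ respectively are pinned down. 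The main obstacle is precisely this last step: away from it the argument is bookkeeping together with Newton polygons, but a priori there is a continuum of possible near-coincidences among the Weil numbers, and only the finite cyclotomic argument can certify which of them are exact equalities.
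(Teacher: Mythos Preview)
Your reduction to deciding when $\sigma(\pi)/\pi$ is a root of unity is correct, as is the observation that by Kronecker this is equivalent to $\sigma(\pi)/\pi$ being integral, hence to a purely $2$-adic divisibility condition. The Newton polygon argument also genuinely works for $n=2^m\geq 8$: there $P_n(3-x)$ is Eisenstein at $2$, the slopes are $1/g$ and $1-1/g$, and (as in the paper's Lemma~\ref{lem:geometrically simple by Newton polygon}) this forces geometric simplicity, i.e.\ $k=1$.

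The gap is in the ordinary case, i.e.\ $n$ not a power of $2$. Two separate claims fail there. First, the second contributing $\sigma$ is \emph{not} complex conjugation: complex conjugation sends $\pi$ to $\overline{\pi}=2/\pi$, so $\overline{\pi}/\pi=2/\pi^2$, and this is a root of unity only when $\pi$ is $\sqrt{2}$ times a root of unity, i.e.\ in the supersingular case. For ordinary $A_n$ it never contributes. The genuine second coincidence comes from a symmetry specific to the Madan--Pal family: if $\alpha$ satisfies $\alpha-2\eta\alpha^{-1}+\eta-1=0$ for a primitive $n$-th root $\eta$, then $-\alpha/\eta$ satisfies the same equation with $\eta$ replaced by $\eta^{-1}$, so $\alpha$ and $-\alpha/\eta$ are both Frobenius eigenvalues with ratio the root of unity $-\eta$. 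Second, and more seriously, the Newton polygon cannot bound $k$ by $2$ here. In the ordinary case the slopes are $0$ and $1$, and the integrality condition on $\sigma(\pi)/\pi$ only says that $\sigma$ preserves the partition of primes above $2$ into those where $\pi$ is a unit and those where it is not; this stabiliser can a priori be large, and nothing in the Newton polygon distinguishes the identity and the Madan--Pal involution above from any other element of it.

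In the paper the logic runs the other way from what you propose: solving the cyclotomic equation $g(\eta_1,\eta_2,\eta_3)=0$ (your ``last step'') is exactly what is needed to prove $k=2$ for \emph{every} $n$ that is not a power of $2$, by showing that the only root-of-unity ratios are the parametric ones $\eta_3=1$ and $\eta_3=-\eta$ apart from finitely many sporadic $n$. Those sporadic $n$ all satisfy $\dim A_n\leq 6$ and, together with $n=4,7,30$, are disposed of by a direct LMFDB lookup rather than by further analysis. So the vanishing-sums-of-roots-of-unity argument is the generic engine, not the exceptional patch.
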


The main step between the Madan--Pal--Robinson classification and Theorem~\ref{T:main} is to identify ways that a ratio of two Frobenius eigenvalues can equal a nontrivial root of unity. This reduces to finding solutions to a certain polynomial equation in three roots of unity; to solve this problem, we apply the paradigm used by Kedlaya--Kolpakov--Poonen--Rubinstein \cite{space-vectors} to classify tetrahedra with rational dihedral angles, incorporating two improvements. First, the classification of minimal additive relations among roots of unity has been extended from weight 12 (by Poonen--Rubinstein \cite[Theorem~3.1]{poonen-rubinstein}) to weight 20 (by Christie--Dykema--Klep \cite[Theorem~4.3]{relations}; see Theorem~\ref{T:relations of weight 16}).
Second, the parallel classification of relations mod 2 has been extended from weight 12 (see \cite[Theorem~6.8]{space-vectors}) to weight 18 (see Theorem~\ref{T:lift minimal relation}).

This analysis leaves a handful of exceptional cases, all involving abelian varieties of dimension at most 6. For these we may appeal directly to the L-Functions and Modular Forms Database (LMFDB) \cite{lmfdb}: \href{https://www.lmfdb.org/Variety/Abelian/Fq/?hst=List&q=2&abvar_point_count=%5B1%5D&simple=yes&search_type=List}{a single query}
returns all simple abelian varieties over $\FF_2$ of order $1$ of dimension at most 6, and we may simply look up the geometric decompositions of these.

As a byproduct, we also determine when two different abelian varieties in the Madan--Pal classification are geometrically isogenous to each other.
\begin{corollary} \label{C:main}
Let $B_1, B_2$ be simple abelian varieties over $\FF_2$ of order $1$.
Then $\Hom(B_{1,\overline{\FF}_q}, B_{2,\overline{\FF}_q}) \neq 0$
if and only if either $B_1$ and $B_2$ are isogenous or there exists a pair $(n_1, n_2)$ with
\[
\{n_1, n_2\} \in \{\{1, 2\}, \{1, 4\}, \{2, 4\}, \{3, 30\}, \{6, 7\}, \{7\}, \{30\}\}
\]
such that $B_i$ is an isogeny factor of $A_{n_i}$ for $i=1,2$.
\end{corollary}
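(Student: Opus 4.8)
The plan is to reduce Corollary~\ref{C:main} to the classification of root-of-unity ratios of Frobenius eigenvalues that already underlies Theorem~\ref{T:main}, and then to read off the relevant Madan--Pal indices; the new ingredient is just bookkeeping. First I would reformulate the $\Hom$ condition. Since $B_1$ and $B_2$ are simple over $\FF_2$, each becomes \emph{isotypic} over $\overline{\FF}_2$: $B_{i,\overline{\FF}_2}$ is isogenous to $C_i^{f_i}$ for a single simple abelian variety $C_i$ over $\overline{\FF}_2$ (this is part of Theorem~\ref{T:main}, but also follows directly from the shape of the Weil polynomial of a simple abelian variety over a finite field). By semisimplicity of the isogeny category (Poincar\'e reducibility), $\Hom(B_{1,\overline{\FF}_2}, B_{2,\overline{\FF}_2}) \neq 0$ if and only if $C_1$ and $C_2$ are isogenous over $\overline{\FF}_2$. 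As any isogeny over $\overline{\FF}_2$ is already defined over some finite extension, and as by Tate's theorem two abelian varieties over $\FF_{2^m}$ are isogenous exactly when they share a Frobenius characteristic polynomial, this condition is in turn equivalent to the existence of Frobenius eigenvalues $\alpha$ of $B_1$ and $\beta$ of $B_2$ with $\alpha/\beta$ a root of unity: if $(\alpha/\beta)^m = 1$ then $\alpha^m = \beta^m$ is a Frobenius eigenvalue common to $B_{1,\FF_{2^m}}$ and $B_{2,\FF_{2^m}}$, forcing their (unique) simple factors to coincide; conversely, if $C_1$ and $C_2$ are isogenous over $\FF_{2^m}$, then $B_{1,\FF_{2^m}}$ and $B_{2,\FF_{2^m}}$ share a Frobenius eigenvalue, i.e.\ $\alpha^m = \beta^m$ for suitable $\alpha, \beta$.

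Next I would invoke the computation of \S\ref{sec:geometric decompositions}. Proving Theorem~\ref{T:main} requires determining \emph{all} pairs $(\alpha,\beta)$, with $\alpha$ a Frobenius eigenvalue of some $A_{n_1}$ and $\beta$ one of some $A_{n_2}$, for which $\alpha/\beta$ is a nontrivial root of unity. The pairs with $n_1 = n_2$ encode the multiplicities $f$ of Theorem~\ref{T:main}; the remaining pairs are precisely those for which $\{n_1,n_2\} \in \{\{1,2\},\{1,4\},\{2,4\},\{3,30\},\{6,7\}\}$. Combining this with the reformulation above, together with the trivial observation that isogenous $B_1, B_2$ always satisfy $\Hom(B_{1,\overline{\FF}_2}, B_{2,\overline{\FF}_2}) \neq 0$, settles every pair $(n_1,n_2)$ with $n_1 \neq n_2$.

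It remains to treat $n_1 = n_2 = n$, where $B_1$ and $B_2$ are both isogeny factors of the single variety $A_n$. If $A_n$ is itself isotypic, then $B_1 \sim B_2$ and we are in the first alternative of the statement. The only values of $n$ for which $A_n$ has two non-isogenous simple isogeny factors that nonetheless acquire a common simple factor over $\overline{\FF}_2$ are $n = 7$ and $n = 30$ (consistent with $f = 3$ and $f = 4$ in Theorem~\ref{T:main}), and for those $n$ \emph{every} pair of simple factors of $A_n$ becomes isogenous over $\overline{\FF}_2$; this follows from the Madan--Pal--Robinson factorization together with the same list of eigenvalue ratios, and gives the entries $\{7\}$ and $\{30\}$. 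For the finitely many low-dimensional exceptional cases I would confirm the conclusion directly against the LMFDB, just as in the proof of Theorem~\ref{T:main}.

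The main obstacle is not in Corollary~\ref{C:main} itself: the hard part — classifying the root-of-unity ratios of Frobenius eigenvalues of the $A_n$, via relations among roots of unity of weight up to $20$ and their reductions modulo $2$ — is already carried out for Theorem~\ref{T:main}. What remains delicate is the accounting: correctly attaching to each relevant eigenvalue the Madan--Pal index it comes from, and separating the genuinely new phenomenon at $n = 7, 30$ (two non-isogenous simple factors of one $A_n$ that fuse over $\overline{\FF}_2$) from the generic isotypic behaviour, so that the final list of index pairs is neither over- nor undercounted.
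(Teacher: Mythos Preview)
Your proposal is correct and follows the same route as the paper's proof: translate the $\Hom$ condition into the existence of a root-of-unity ratio of Frobenius eigenvalues (the paper invokes Honda--Tate for this directly), then read off the admissible index pairs from Lemma~\ref{lem:roots of unity solutions} and Table~\ref{table:sporadic orders}, with the case $n_1=n_2\in\{7,30\}$ handled via Theorem~\ref{T:madan-pal}. The only imprecision is your claim that proving Theorem~\ref{T:main} already \emph{required} determining the cross-index pairs with $n_1\neq n_2$; in fact that proof uses only the equal-order case, and the cross-index information comes straight from Table~\ref{table:sporadic orders}, which records the orders of $\eta_1,\eta_2$ for \emph{all} torsion solutions of $g=0$ regardless of whether those orders coincide (the paper's proof of the corollary cites that table rather than the LMFDB).
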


It is natural to ask about other properties of simple abelian varieties of order 1; for instance, how often are they ordinary, or principally polarizable? We discuss some such questions in \S\ref{sec:additional properties} and leave the rest to the interested reader.

Some of our arguments depend on computer calculations made in \SageMath \cite{sage}. We have collected these in a series of Jupyter notebooks and made them available via a GitHub repository \cite{repo}.
We also use the code accompanying \cite{space-vectors} for computing torsion closures of ideals in Laurent polynomial ideals; see \cite{space-vectors-repo}.

\section{Weil polynomials}

Throughout this section and the next, let $\FF_q$ be a finite field of characteristic $p$. (We will eventually take $p = q =2$, but we do not impose this restriction yet.)

By a \emph{Weil polynomial} (or more precisely a \emph{$q$-Weil polynomial} if we need to specify $q$), we will mean a monic integer polynomial $Q(x)$ of some even degree $2g$ such that every root of $Q(x)$ in $\CC$ has absolute value $q^{1/2}$.
Note that this implies that
\begin{equation} \label{eq:functional equation}
Q(x) = \pm  q^{-g} x^{2g} Q(q/x).
\end{equation}
By a \emph{real Weil polynomial}, we will mean a monic integer polynomial $R(x)$ with roots in the interval $[-2\sqrt{q}, 2\sqrt{q}]$. For every real Weil polynomial, the formula
\begin{equation} \label{eq:Weil poly to real Weil poly}
Q(x) = x^{\deg R(x)} R(x + qx^{-1})
\end{equation}
defines a Weil polynomial $Q(x)$; the Weil polynomials that occur in this way are precisely the ones for which the plus sign occurs in \eqref{eq:functional equation}.

Now let $A$ be an abelian variety over $\FF_q$. By results of Weil, there is a Weil polynomial $Q(x)$ which equals the characteristic polynomial of Frobenius on the $\ell$-adic Tate module for every prime $\ell$ not dividing $q$.
Moreover, the plus sign occurs in \eqref{eq:functional equation}, so we may associate to $A$ both a Weil polynomial and a real Weil polynomial.

We say that $A$ is \emph{simple} if it is nonzero and not isogenous to the product of two nonzero abelian varieties.
Since multiplication of Weil polynomials corresponds to taking products of abelian varieties, any abelian variety with irreducible Weil polynomial is simple. The converse is not quite true because of the Honda--Tate theorem; see \S\ref{sec:Honda-Tate}.

By the \emph{Newton polygon} of $A$, we will mean the Newton polygon of its associated Weil polynomial $Q(x)$ with respect to the $p$-adic valuation $v_q$ for the normalization $v_q(q) = 1$.
It can also be computed from the Newton polygon of the real Weil polynomial $R(x)$ (for the same normalized valuation): by \eqref{eq:Weil poly to real Weil poly}, all slopes in $[0, 1/2)$ have the same multiplicity in both polygons.
(Explicitly, if $v_q(\alpha) \in [0,1/2)$, then $v_q(q\alpha^{-1}) \in (1/2, 1]$ and so $v_q(\alpha + q\alpha^{-1}) = v_q(\alpha)$.)
For each $s \in [0,1)$, the multiplicity $m_s$ of $s$ as a slope in the Newton polygon has the property that $m_s s \in \ZZ$, e.g., from Manin's description of the formal group of $A$ in terms of the quantities $m_s s$ \cite[Theorem~4.1]{manin};
in other words, the Newton polygon of $A$ has integral vertices.
We say that $A$ is \emph{ordinary} if the slopes of its Newton polygon are all equal to 0 or 1.

For $n$ a positive integer, let $A_{\FF_{q^n}}$ denote the base extension of $A$ from $\FF_q$ to $\FF_{q^n}$. From the definition of the Weil polynomial, we see that the Weil polynomial $Q_n(x)$ of $A_{\FF_{q^n}}$ has roots which are the $n$-th powers of the roots of $Q(x)$; that is,
\[
Q_n(x) = \Res_y(Q(y), y^n - x)
\]
where $\Res$ denotes the resultant.

We say that $A$ is \emph{geometrically simple} if $A_{\FF_{q^n}}$ is simple for each positive integer $n$, or equivalently if $A_{\overline{\FF}_q}$ is simple.
\begin{lemma} \label{lem:geometrically simple criterion}
Let $A$ be a simple abelian variety over $\FF_q$ with irreducible Weil polynomial $Q(x)$. If no two roots of $Q(x)$ have ratio equal to a nontrivial root of unity, then $A$ is geometrically simple.
\end{lemma}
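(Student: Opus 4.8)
The plan is to reduce geometric simplicity to the irreducibility of the Weil polynomial $Q_n(x)$ of $A_{\FF_{q^n}}$ for every $n \geq 1$, and then to invoke the principle recorded above that an abelian variety with irreducible Weil polynomial is simple. Since $A$ is geometrically simple precisely when $A_{\FF_{q^n}}$ is simple for all $n$, establishing irreducibility of every $Q_n$ suffices.

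First I would fix $n$ and examine the roots of $Q_n(x)$. Let $\alpha_1, \dots, \alpha_{2g}$ be the roots of $Q(x)$ in $\overline{\QQ}$; these are distinct since $Q$ is irreducible over $\QQ$. From the description $Q_n(x) = \Res_y(Q(y), y^n - x)$ we get $Q_n(x) = \prod_{i=1}^{2g}(x - \alpha_i^n)$. The absolute Galois group of $\QQ$ acts transitively on $\{\alpha_1,\dots,\alpha_{2g}\}$ and commutes with the $n$-th power map $z \mapsto z^n$, so it acts transitively on the set $S = \{\alpha_i^n : 1 \le i \le 2g\}$. Hence all elements of $S$ are Galois conjugate; let $h(x)$ be their common minimal polynomial. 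For $s \in S$ the cardinality $m_s = \#\{i : \alpha_i^n = s\}$ is independent of $s$ (an element of the Galois group carrying $s$ to $s'$ carries $\{i : \alpha_i^n = s\}$ bijectively onto $\{i : \alpha_i^n = s'\}$); writing $e$ for this common value we obtain $Q_n(x) = h(x)^e$ with $h$ irreducible.

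Next I would observe that $e \geq 2$ holds exactly when there exist distinct indices $i \neq j$ with $\alpha_i^n = \alpha_j^n$, equivalently with $\alpha_i/\alpha_j$ a nontrivial $n$-th root of unity. By hypothesis no ratio of two roots of $Q$ is a nontrivial root of unity, so this cannot occur for any $n$; thus $e = 1$ and $Q_n(x) = h(x)$ is irreducible for every $n \geq 1$, whence each $A_{\FF_{q^n}}$ is simple and $A$ is geometrically simple. This argument is essentially bookkeeping; the only step deserving care is the transitivity observation, which is what guarantees simultaneously that $Q_n$ is a \emph{power} of a single irreducible polynomial (rather than a product of several distinct ones) and that its exponent is detected by a single coincidence among the values $\alpha_i^n$. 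I do not expect a substantive obstacle beyond this.
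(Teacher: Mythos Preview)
Your argument is correct and is essentially the same as the paper's proof: both show that $Q_n(x)$ is a power of a single irreducible polynomial via Galois transitivity on the $n$-th powers of the roots of $Q$, and then observe that the exponent exceeds $1$ only when some ratio $\alpha_i/\alpha_j$ is a nontrivial root of unity. The paper expresses the exponent as the field degree $[\QQ(\pi):\QQ(\pi^n)]$ rather than as a fiber cardinality, but this is the same quantity.
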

\begin{proof}
Let $\pi \in \overline{\QQ}$ be a root of $Q(x)$.
Let $f$ be the integer $[\QQ(\pi):\QQ(\pi^n)]$.
If $\pi_1,\dots,\pi_d$ are the conjugates of $\pi$ listed without repetition, then $d = [\QQ(\pi):\QQ]$; $\pi_1^n, \dots, \pi_d^n$ are the conjugates of $\pi^n$; and the number of distinct entries in this list is $[\QQ(\pi^n):\QQ]$. It follows that $Q_n(x)$ is the $f$-th power of an irreducible polynomial.

In the above notation, the condition on $Q(x)$ implies that $f=1$ for all $n$, so $Q_n(x)$ is irreducible for all $n$.
\end{proof}

\begin{lemma} \label{lem:geometrically simple by Newton polygon}
Let $A$ be an abelian variety over $\FF_q$ of dimension $g > 2$ with Weil polynomial $Q(x)$. Suppose that the Newton polygon of $A$ consists of the slopes $1/g$ and $1-1/g$, each with multiplicity $g$. Then $A$ is geometrically simple.
\end{lemma}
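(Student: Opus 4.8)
The plan is to argue by contradiction, using that the Newton polygon is a geometric invariant and that the polygon in question cannot be split as a sum of two Newton polygons of abelian varieties of smaller dimension. First I would observe that the Newton polygon of $A$ is unchanged under base extension: replacing $\FF_q$ by $\FF_{q^n}$ replaces each root $\alpha$ of $Q(x)$ by $\alpha^n$, and since $v_q(\alpha^n) = n\,v_q(\alpha)$ while the normalized valuation on $\FF_{q^n}$ is $\tfrac1n v_q$, the slope attached to $\alpha^n$ over $\FF_{q^n}$ equals the slope attached to $\alpha$ over $\FF_q$. (This is the computation already made in the excerpt for slopes in $[0,1/2)$, now applied to all slopes.) Consequently $A_{\overline{\FF}_q}$ has the same Newton polygon as $A$, with slopes $1/g$ and $1-1/g$ each of multiplicity $g$.

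Now suppose $A_{\overline{\FF}_q}$ is not simple. Every abelian variety over $\overline{\FF}_q$ is defined over a finite subfield, hence has a Weil polynomial and a Newton polygon, and we may write $A_{\overline{\FF}_q} \sim B_1 \times B_2$ with each $B_i$ nonzero of dimension $g_i \geq 1$, so that $g_1 + g_2 = g$. Since Weil polynomials multiply under products of abelian varieties and the Newton polygon of a product of polynomials is obtained by merging the slopes of the factors, every slope of each $B_i$ lies in $\{1/g,\, 1-1/g\}$. Let $a_i$ and $b_i$ be the multiplicities of $1/g$ and of $1-1/g$ in the Newton polygon of $B_i$. Then $a_i + b_i = 2g_i$ (the polygon has horizontal length $2g_i$) and $\tfrac1g a_i + (1-\tfrac1g) b_i = g_i$ (the slopes of a $g_i$-dimensional abelian variety sum to $g_i$, since the product of its $2g_i$ Frobenius eigenvalues is $\pm$ the $g_i$-th power of the field size). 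Subtracting gives $(g-2)(b_i - g_i) = 0$, and since $g > 2$ we conclude $a_i = b_i = g_i$.

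To finish, I would invoke the theorem of Dieudonné and Manin that the isocrystal attached to an abelian variety is a direct sum of the simple isocrystals $E_{s/r}$ of dimension $r$, where $s/r$ runs over the slopes written in lowest terms; equivalently, a slope equal to $s/r$ in lowest terms occurs in the Newton polygon with multiplicity divisible by $r$. For the slope $1/g$, already in lowest terms, this gives $g \mid a_i = g_i$, contradicting $1 \leq g_i \leq g-1$. Hence $A_{\overline{\FF}_q}$ is simple, i.e.\ $A$ is geometrically simple. I do not expect a genuine obstacle: the argument just assembles standard properties of Newton polygons (base-change invariance, additivity under products, integrality of slope multiplicities). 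The one delicate point is the step forcing $a_i = b_i = g_i$, which is precisely where the hypothesis $g > 2$ enters — and it must, since for $g = 2$ the hypothesis describes the supersingular polygon (slope $1/2$ with multiplicity $4$), and a supersingular abelian surface over $\overline{\FF}_q$ is isogenous to the square of a supersingular elliptic curve, hence not simple.
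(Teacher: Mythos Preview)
Your argument is correct and follows essentially the same route as the paper: the Newton polygon is invariant under base extension, and the given polygon cannot be partitioned into two Newton polygons of smaller abelian varieties. The paper phrases the obstructions as ``integer vertices'' and ``symmetry of slopes under $s \mapsto 1-s$'' rather than invoking Dieudonn\'e--Manin and the slope-sum identity, but these are equivalent formulations of the same constraints.
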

\begin{proof}
Since the condition on the Newton polygon is stable under base extension, it suffices to check that $A$ is simple. For this, note that there is no way to separate the slopes of the Newton polygon into two sets of slopes which each satisfy the conditions on the Newton polygon of an abelian variety
(namely, integer vertices and symmetry of slopes under $s \mapsto 1-s$).
\end{proof}

\section{The Honda--Tate theorem}
\label{sec:Honda-Tate}

We summarize \cite[Theorem 8, Theorem 9]{waterhouse-milne}.

\begin{theorem}[Honda--Tate] \label{T:Honda-Tate}
There is a one-to-one correspondence between isogeny classes of simple abelian varieties over $\FF_q$ and irreducible $q$-Weil polynomials,
in which a simple abelian variety $A$ corresponds to an irreducible $q$-Weil polynomial $Q(x)$ with the property that the Weil polynomial of $A$ equals $Q(x)^e$ for some positive integer $e$.
More precisely, $e$ is the least common denominator of the following rational numbers indexed by places $v$ of the number field $\QQ(\pi) = \QQ[x]/(Q(x))$:
\begin{itemize}
\item
If $v$ is real: $\frac{1}{2}$.
\item
If $v$ is complex: $0$.
\item
If $v$ lies over a finite prime not equal to $p$: $0$.
\item
If $v$ lies over $p$: $\frac{\ord_v(\pi)}{\ord_v(q)} [\QQ(\pi)_v:\QQ_p]$.
\end{itemize}
\end{theorem}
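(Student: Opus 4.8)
Theorem~\ref{T:Honda-Tate} is a synthesis of several landmark results, so my plan is not to give a self-contained argument but to organize the statement around its three logical components and to cite the deep inputs rather than reprove them.

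The first component, due to Tate, yields both the injectivity of the correspondence and the description of the endomorphism algebra. The characteristic polynomial of Frobenius is an isogeny invariant by the very construction of the Weil polynomial, so the map $A \mapsto Q(x)$ is well defined on isogeny classes. Conversely, Tate's theorem that the $\ell$-adic Tate module $V_\ell A$ is a semisimple $\QQ_\ell[F]$-module and that $\operatorname{End}^0(A) \otimes \QQ_\ell \xrightarrow{\sim} \operatorname{End}_{\QQ_\ell[F]}(V_\ell A)$ shows that the characteristic polynomial of Frobenius determines the isogeny class, and that for $A$ simple $D := \operatorname{End}^0(A)$ is a central division algebra over the number field $K := \QQ(\pi)$ generated by a root $\pi$ of the minimal polynomial $Q(x)$ of Frobenius. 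Comparing $\QQ_\ell$-dimensions in this isomorphism, and using semisimplicity of the Frobenius action, forces $V_\ell A \cong (K \otimes \QQ_\ell)^e$ where $e$ is the Schur index of $D$; hence the Weil polynomial of $A$ is $Q(x)^e$ and $2 \dim A = e\,[K:\QQ]$.

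The second component, due to Honda, is surjectivity: every irreducible $q$-Weil polynomial $Q(x)$ actually occurs. I would invoke this as a black box. Its proof is a CM-lifting construction: starting from a root $\pi$ of $Q(x)$, one builds an abelian variety with complex multiplication in characteristic $0$ (using the Shimura--Taniyama theory of CM types) whose good reduction at a suitable place above $p$ has Frobenius equal to a power of a conjugate of $\pi$, and then a norm/descent argument produces a simple abelian variety over $\FF_q$ realizing $\pi$ itself up to isogeny. Nothing in the present setup prepares this machinery.

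The third component is the formula for $e$. Since $D$ is a central division algebra over the number field $K$, its Schur index --- which by the dimension count above equals the exponent $e$ appearing in the Weil polynomial --- is the least common multiple of the denominators of the local Hasse invariants $\operatorname{inv}_v(D) \in \QQ/\ZZ$; this is the standard consequence of the fundamental exact sequence for Brauer groups in class field theory. It therefore remains to identify these invariants, which is exactly the list in the four bullet points and is again due to Tate: $D$ splits (invariant $0$) at complex places and at finite places not above $p$; at a real place the invariant is $\tfrac12$ (the only value compatible with the positive Rosati involution on $D$); and at a place $v$ above $p$ one computes $\operatorname{inv}_v(D) = \tfrac{\ord_v(\pi)}{\ord_v(q)}[K_v:\QQ_p]$ by analyzing the Frobenius action on the contravariant Dieudonné module of the $p$-divisible group of $A$. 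Taking the least common denominator of this list of rationals yields the stated value of $e$.

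The real obstacles are the second and third components: Honda's CM lifting needs the full theory of complex multiplication and of reduction of abelian varieties, and the computation of the invariant at $p$ needs Dieudonné theory --- both well beyond what is developed in this excerpt, which is why in the paper we simply cite \cite[Theorem 8, Theorem 9]{waterhouse-milne}.
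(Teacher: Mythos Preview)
Your outline is correct and follows the standard three-part structure of the Honda--Tate theorem (Tate's isogeny/endomorphism results for injectivity and the description of $D$, Honda's CM lifting for surjectivity, and Tate's computation of the local Brauer invariants together with period${}={}$index over global fields for the formula for $e$). The paper itself gives no proof at all---it simply records the statement and cites \cite[Theorem 8, Theorem 9]{waterhouse-milne}, as you yourself note in the last line---so your proposal is strictly more informative than what the paper provides and is entirely consistent with it.
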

More precisely, the endomorphism algebra of $A$ is a division algebra with center $\QQ(\pi)$ and the listed quantities are the Brauer invariants of this division algebra.

\begin{corollary}  \label{cor:Honda-Tate p}
For every simple abelian variety $A$ over $\FF_p$, the Weil polynomial of $A$ is irreducible \emph{unless} it equals $(x^2-p)^2$.
\end{corollary}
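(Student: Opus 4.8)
The plan is to read the statement off from the Honda--Tate formula for the exponent $e$ in Theorem~\ref{T:Honda-Tate}, taking advantage of the fact that over a prime field we have $q = p$. Write the Weil polynomial of the simple abelian variety $A$ as $Q(x)^e$ with $Q(x)$ irreducible, and put $K = \QQ(\pi) = \QQ[x]/(Q(x))$. A linear factor $Q(x) = x - c$ would require $c \in \ZZ$ with $|c| = \sqrt p$, which is impossible; so $\deg Q \geq 2$, and hence $Q(x)^e$ is reducible exactly when $e \geq 2$. It therefore suffices to prove that $e \geq 2$ forces $Q(x) = x^2 - p$ (and $e = 2$).

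The main step is the observation that \emph{when $q = p$, every term in the list of Theorem~\ref{T:Honda-Tate} attached to a place $v$ of $K$ above $p$ is an integer.} To see this, normalize $\ord_v$ so that its value group is $\ZZ$, and let $e_v$ and $f_v$ be the ramification and residue degrees of $v$ over $p$; then $\ord_v(\pi) \in \ZZ$, while $\ord_v(q) = \ord_v(p) = e_v$ and $[\QQ(\pi)_v : \QQ_p] = e_v f_v$, so the corresponding invariant equals $\tfrac{\ord_v(\pi)}{\ord_v(q)}\,[\QQ(\pi)_v:\QQ_p] = \ord_v(\pi)\, f_v \in \ZZ$. (This is exactly where the argument would break down for $q = p^a$ with $a > 1$: there $\ord_v(q) = a e_v$, and the factor of $a$ need not cancel, which is why examples with $e > 1$ do occur over non-prime fields.) Since the complex places and the finite places away from $p$ contribute $0$, the only entries of the list with denominator larger than $1$ are the $\tfrac12$'s coming from real places of $K$. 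Hence $e \geq 2$ forces $K$ to admit a real place, and in that case $e = 2$.

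To finish, note that a real place of $K$ is an embedding $K \hookrightarrow \RR$ sending $\pi$ to a real number of absolute value $\sqrt p$, i.e.\ to $\pm\sqrt p$; since $Q$ is irreducible with $\pm\sqrt p$ among its roots, this forces $Q(x) = x^2 - p$, and then $e = 2$ by the previous paragraph, so the Weil polynomial is $(x^2 - p)^2$, as claimed. I do not anticipate any genuine obstacle here: the whole argument is bookkeeping with valuation normalizations, and the one substantive input is the identity $\ord_v(q) = \ord_v(p)$, which holds precisely because $q$ is prime.
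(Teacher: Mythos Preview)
Your proof is correct and takes essentially the same approach as the paper: both show that when $q=p$ the local invariants at places above $p$ in Theorem~\ref{T:Honda-Tate} are integers, so $e>1$ forces a real place of $\QQ(\pi)$ and hence $Q(x)=x^2-p$. You fill in more detail than the paper does (explicitly ruling out linear $Q$, writing the invariant as $\ord_v(\pi)\,f_v$, and spelling out why a real embedding pins down $Q$), but the argument is the same.
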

\begin{proof}
In Theorem~\ref{T:Honda-Tate}, the invariant associated to every place above $p$ is an integer (namely the degree of the residue field of $\QQ(\pi)_v$ over $\FF_p$). Hence the only way to get a nontrivial denominator is for $\QQ(\pi)$ to have a real place, which occurs if and only if 
$Q(x) = x^2 - p$.
\end{proof}

\begin{corollary} \label{cor:Honda-Tate ordinary}
For every \emph{ordinary} simple abelian variety $A$ over $\FF_q$, the Weil polynomial of $A$ is irreducible.
\end{corollary}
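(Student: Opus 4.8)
The plan is to deduce this immediately from Corollary~\ref{cor:Honda-Tate p}, whose conclusion already isolates the unique potential obstruction: the Weil polynomial of a simple abelian variety over $\FF_p$ is irreducible unless it equals $(x^2-p)^2$. So it suffices to argue that this exceptional case cannot arise when $A$ is ordinary.

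First I would recall what the exceptional case means: if the Weil polynomial of $A$ is $(x^2-p)^2$, then the associated irreducible $q$-Weil polynomial in the sense of the Honda--Tate theorem is $Q(x) = x^2 - p$, whose roots are $\pm\sqrt{p}$. Next I would compute the Newton polygon of $A$ from $Q(x)$ with respect to the normalized valuation $v_p$ with $v_p(p) = 1$: each root has $v_p$-valuation $\tfrac12$, so the Newton polygon consists of the single slope $\tfrac12$ with multiplicity $2$ (this is the supersingular elliptic curve). Finally I would invoke the definition of ordinary: an ordinary abelian variety has all Newton slopes equal to $0$ or $1$, so an abelian variety with Weil polynomial $(x^2-p)^2$ is not ordinary. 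Hence for ordinary $A$ the exceptional case is excluded, and the Weil polynomial of $A$ is irreducible.

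There is no real obstacle here; the work has all been done in Corollary~\ref{cor:Honda-Tate p}, and the only thing to check is the elementary incompatibility between the Newton polygon of $x^2-p$ and the ordinarity hypothesis. The one point to be careful about is to phrase the valuation computation in the same normalization used earlier in the section (so that the slope is genuinely $\tfrac12$ rather than some rescaling), but this is routine.
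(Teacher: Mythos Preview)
Your proof is correct for the statement as written (over $\FF_p$), but it takes a different route from the paper's own argument. You invoke Corollary~\ref{cor:Honda-Tate p} as a black box and then rule out the single exceptional Weil polynomial $(x^2-p)^2$ by observing that its Newton slopes are all $1/2$, hence non-ordinary. The paper instead returns directly to Theorem~\ref{T:Honda-Tate}: ordinarity forces $\ord_v(\pi)/\ord_v(q)\in\{0,1\}$ at every place $v$ above $p$, so each $p$-adic Brauer invariant $\frac{\ord_v(\pi)}{\ord_v(q)}[\QQ(\pi)_v:\QQ_p]$ is already an integer, and the only remaining obstruction comes from real places, i.e.\ $Q(x)=x^2-q$ or $Q(x)=x\pm\sqrt{q}$, which are visibly non-ordinary. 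Your argument is shorter once Corollary~\ref{cor:Honda-Tate p} is in hand, but it genuinely requires $q=p$ (that corollary's proof uses that the residue degree is an integer, which gives nothing unless $q=p$). The paper's direct computation, by contrast, works verbatim over any $\FF_q$; this is relevant because in \S\ref{sec:geometric decompositions} the result is applied to $A_{\FF_{q^n}}$, which does not live over a prime field.
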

\begin{proof}
In Theorem~\ref{T:Honda-Tate}, the invariant associated to every place above $p$ is an integer because $\ord_v(\pi)/\ord_v(q) \in \{0,1\}$. Hence the only way to get a nontrivial denominator is for $\QQ(\pi)$ to have a real place, which occurs if and only if 
$Q(x) = x^2 - q$ or $Q(x) = x \pm \sqrt{q}$; but these cases are not ordinary.
\end{proof}

\section{The Madan--Pal--Robinson classification}
\label{sec:classification}

We describe the classification of abelian varieties over $\FF_2$ of order 1 in terms of their associated real Weil polynomials. This is due to Madan--Pal \cite{madan-pal} modulo the computation of certain irreducible factors, which was completed by Robinson \cite{robinson}.

For $n$ a positive integer, define the integer polynomial
\begin{equation} \label{eq:madan-pal polynomial}
P_n(x) = \prod_{0 \leq k \leq n/2, \gcd(n,k) = 1} \left( x^2 - (4 + 2 \cos \frac{2\pi k}{n})x + 1\right)
\end{equation}
of degree $\max\{2, \phi(n)\}$.
For $n = 2, 7, 30$, $P_n(x)$ is reducible:
\begin{align*}
P_2(x) &= (x-1)^2 \\
P_7(x) &= (x^3 - 5x^2 + 6x - 1)(x^3 - 6x^2 + 5x - 1) \\
P_{30}(x) &= (x^4 - 8x^3 + 14x^2 - 7x + 1)(x^4 - 7x^3 + 14x^2 - 8x + 1).
\end{align*}

\begin{theorem}[Madan--Pal, Robinson] \label{T:madan-pal}
The real Weil polynomials associated to simple abelian varieties over $\FF_2$ of order $1$ are precisely those of the form
$P(3-x)$ where $P(x)$ equals either $P_n(x)$ for some positive integer $n \neq 2,7,30$ or an irreducible factor of one of $P_2(x), P_7(x), P_{30}(x)$.
\end{theorem}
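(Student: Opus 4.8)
The plan is to recast the classification in terms of the real Weil polynomial (the approach of Madan--Pal \cite{madan-pal}), with the factorizations of the exceptional $P_n$ taken from Robinson \cite{robinson}. Let $A$ be an abelian variety over $\FF_2$ with Weil polynomial $Q(x)$ and real Weil polynomial $R(x)$ of degree $g = \dim A$. Since $\#A(\FF_2) = Q(1)$ and $Q(1) = R(3)$ by \eqref{eq:Weil poly to real Weil poly} (evaluated at $x = 1$ with $q = 2$), the variety $A$ has order $1$ precisely when $R(3) = 1$. Put $P(x) = (-1)^g R(3 - x)$, the monic polynomial whose roots are the numbers $\delta = 3 - \gamma$ with $\gamma$ ranging over the roots of $R$; then $R(x) = (-1)^g P(3 - x)$, the order-$1$ condition reads $P(0) = (-1)^g$, and, since the roots of $R$ lie in $[-2\sqrt 2, 2\sqrt 2]$, the roots $\delta$ of $P$ all lie in $[3 - 2\sqrt 2, 3 + 2\sqrt 2] = [(\sqrt 2 - 1)^2, (\sqrt 2 + 1)^2]$ and are in particular positive. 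Hence $A$ has order $1$ exactly when the product of the roots of $P$ equals $1$, in which case each root $\delta$ is a unit in its ring of integers.

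Given this, $\delta + \delta^{-1}$ is a totally real algebraic integer, and as $t \mapsto t + t^{-1}$ maps $[(\sqrt 2 - 1)^2, (\sqrt 2 + 1)^2]$ into $[2, 6]$ (attaining $6 = (\sqrt 2 + 1)^2 + (\sqrt 2 - 1)^2$ at the endpoints), all conjugates of $\delta + \delta^{-1} - 4$ lie in $[-2, 2]$. By Kronecker's theorem, $\delta + \delta^{-1} - 4 = \zeta + \zeta^{-1}$ for a root of unity $\zeta = \zeta_n^k$, so $\delta$ is a root of $x^2 - (4 + 2\cos\frac{2\pi k}{n})x + 1$; running over the Galois conjugates of $\delta$ shows that the minimal polynomial of $\delta$ divides $P_n(x)$ for a suitable $n$. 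If in addition $A$ is simple, then by Corollary~\ref{cor:Honda-Tate p} the polynomial $Q(x)$ is irreducible or equals $(x^2 - 2)^2$, and in either case $R(x)$ --- hence $P(x)$ --- is irreducible; so $P(x)$ is either $P_n(x)$ itself or, when $P_n$ is reducible, an irreducible factor of $P_n(x)$. For the converse, one checks that each $P_n(x)$ lies in $\ZZ[x]$ (its coefficients are symmetric functions of a Galois orbit of algebraic integers), has all roots in $[(\sqrt 2 - 1)^2, (\sqrt 2 + 1)^2]$, and that those roots are totally positive units; thus, for $P$ equal to $P_n$ or to any irreducible factor of it, $R(x) := (-1)^{\deg P} P(3 - x)$ is a genuine real Weil polynomial with $R(3) = 1$. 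The associated $Q(x)$ is irreducible unless $R(x) = x^2 - 8$ --- the case $n = 1$, where $Q(x) = (x^2 - 2)^2$ --- because for irreducible $R$ the element $\gamma^2 - 8$ of the totally real field $\QQ(\gamma)$ is totally negative, hence not a square, so $[\QQ(\gamma, \sqrt{\gamma^2 - 8}) : \QQ] = 2g$. In every case, then, Theorem~\ref{T:Honda-Tate} and Corollary~\ref{cor:Honda-Tate p} supply a simple abelian variety over $\FF_2$ with real Weil polynomial $R$, which accordingly has order $1$.

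The one remaining point --- and the step I would simply quote from Robinson \cite{robinson} rather than reprove --- is that $P_n(x)$ is irreducible for every $n \notin \{2, 7, 30\}$, while $P_2, P_7, P_{30}$ factor exactly as displayed before the theorem. Equivalently, one must determine for which $n$ a root $\delta$ of $x^2 - (4 + 2\cos\frac{2\pi}{n})x + 1$ lies in the maximal real subfield $\QQ(\zeta_n)^+$ of the cyclotomic field --- that is, for which $n$ the quantity $(4 + 2\cos\frac{2\pi}{n})^2 - 4 = (2 + 2\cos\frac{2\pi}{n})(6 + 2\cos\frac{2\pi}{n})$ is a square in $\QQ(\zeta_n)^+$. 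The first factor $2 + 2\cos\frac{2\pi}{n} = (\zeta_{2n} + \zeta_{2n}^{-1})^2$ is already a square, so the question is governed by the behaviour of $6 + 2\cos\frac{2\pi}{n}$ in cyclotomic fields; establishing that only the three listed values of $n$ survive, together with the explicit factorizations, is the main obstacle, and it is exactly what Robinson contributed to complete the analysis initiated by Madan--Pal.
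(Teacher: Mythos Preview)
Your argument is correct and follows essentially the same route as the paper, which simply cites \cite[Theorem~4]{madan-pal} for the classification and \cite{robinson} for the irreducibility of $P_n$; you have reconstructed the Madan--Pal step explicitly (via the substitution $\delta = 3-\gamma$, the unit condition $\prod \delta_i = 1$, and Kronecker's theorem applied to $\delta + \delta^{-1} - 4$) rather than citing it, and you likewise cite Robinson for the remaining irreducibility analysis. The only cosmetic point is the sign: the paper writes the real Weil polynomial as $P(3-x)$, which is literally non-monic when $\deg P$ is odd (the factors of $P_2$ and $P_7$), and your $(-1)^g$ normalization handles this correctly.
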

\begin{proof} 
We first check that Corollary~\ref{cor:Honda-Tate p} applies to show that $P_n(3-x)$ is the real Weil polynomial of some abelian variety $A_n$ over $\FF_2$ of order $1$. 
For this it suffices to verify that $x^2-2$ never occurs as a factor of $P_n(3-x)$, 
or equivalently that $(3-x)^2 - 2 = x^2 - 6x + 7$ never occurs as a factor of $P_n(x)$; this is apparent because $P_n(0) = 1$. 

By \cite[Theorem~4]{madan-pal}, every simple abelian variety over $\FF_2$ of order 1 is a factor of $A_n$ for a unique value of $n$.
It thus remains to compute the factorizations of the polynomials $P_n(x)$; this is established in \cite{robinson} based on partial results from \cite{madan-pal}.
\end{proof}

We next compute the Newton polygons of these abelian varieties.

\begin{lemma} \label{lem:Pn value at 1}
For $n = 2^m$ with $m \geq 2$, $P_n(1) = (-1)^{n/4} 2$.
\end{lemma}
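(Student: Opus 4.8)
The plan is to evaluate the product \eqref{eq:madan-pal polynomial} at $x=1$ directly. For $n = 2^m$ with $m \geq 2$, the condition $\gcd(n,k)=1$ is equivalent to $k$ being odd, so the product runs over the odd integers $k$ with $1 \le k \le n/2$; since $n/2 = 2^{m-1}$ is even, these are $1, 3, \dots, 2^{m-1}-1$, a set of cardinality $2^{m-2} = n/4$. At $x=1$ each quadratic factor of $P_n(x)$ contributes
\[
1 - \Bigl(4 + 2\cos\tfrac{2\pi k}{n}\Bigr) + 1 = -2 - 2\cos\tfrac{2\pi k}{n},
\]
so $P_n(1) = \prod_k \bigl(-2 - 2\cos\tfrac{2\pi k}{n}\bigr)$, the product over this set of $n/4$ values of $k$.

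Next I would compute $|P_n(1)|$ by passing to roots of unity. Writing $\zeta_k = e^{2\pi i k/n}$, which is a primitive $n$-th root of unity precisely because $k$ is odd, one has
\[
-2 - 2\cos\tfrac{2\pi k}{n} = -\bigl(2 + \zeta_k + \zeta_k^{-1}\bigr) = -\zeta_k^{-1}(1 + \zeta_k)^2 =: g(\zeta_k),
\]
and a direct check shows $g(\zeta^{-1}) = g(\zeta)$. Since the odd residues in $(0,n/2)$ and those in $(n/2,n)$ together exhaust the primitive $n$-th roots of unity, with $k \mapsto n-k$ (equivalently $\zeta \mapsto \zeta^{-1}$) interchanging the two halves, one gets
\[
\prod_{\zeta}g(\zeta) = \Bigl(\prod_{\substack{0<k<n/2\\ k\ \mathrm{odd}}}g(\zeta_k)\Bigr)^2 = P_n(1)^2, \qquad \prod_{\zeta}g(\zeta) = (-1)^{\phi(n)}\Bigl(\prod_\zeta \zeta\Bigr)^{-1}\Bigl(\prod_\zeta(1+\zeta)\Bigr)^2,
\]
the remaining products running over all primitive $n$-th roots $\zeta$. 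Using $\Phi_{2^m}(x) = x^{2^{m-1}} + 1$ and $\phi(n) = 2^{m-1}$ (which is even since $m \ge 2$): $\prod_\zeta \zeta = (-1)^{\phi(n)}\Phi_n(0) = 1$, $\prod_\zeta(1+\zeta) = (-1)^{\phi(n)}\Phi_n(-1) = \Phi_{2^m}(-1) = 2$, and $(-1)^{\phi(n)} = 1$. Hence $P_n(1)^2 = 4$, so $P_n(1) = \pm 2$.

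It then remains to pin down the sign, which is the only step requiring any care. Each factor $-2 - 2\cos\tfrac{2\pi k}{n}$ is strictly negative: for $0 < k < n$ one has $\cos\tfrac{2\pi k}{n} = -1$ only when $k = n/2$, and this is excluded because $k$ is odd while $n/2$ is even. Thus $P_n(1)$ is a product of $n/4$ strictly negative reals, so its sign is $(-1)^{n/4}$; combined with $|P_n(1)| = 2$ this gives $P_n(1) = (-1)^{n/4}\cdot 2$. The main (minor) obstacle is the bookkeeping of the index set — confirming that the odd residues below and above $n/2$ pair up under $k \mapsto n-k$ to cover each primitive $n$-th root exactly once, and that the relevant factor is invariant under this involution — after which the cyclotomic evaluation is routine.
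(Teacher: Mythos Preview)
Your proof is correct and follows essentially the same route as the paper: both factor $-2-2\cos\tfrac{2\pi k}{n}$ via $2+\zeta^k+\zeta^{-k}=(1+\zeta^k)(1+\zeta^{-k})$ and reduce to $\Phi_n(-1)=2$. The only difference is organizational: the paper pulls out the sign $(-1)^{n/4}$ at the outset and recognizes $\prod_k (1+\zeta^k)(1+\zeta^{-k})=\Phi_n(-1)$ directly, whereas you square to pass to the full product over primitive roots and then recover the sign separately; this detour is unnecessary but harmless.
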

\begin{proof}
Let $\zeta$ be a primitive $n$-th root of unity. We may then write
\begin{align*}
P_n(1) &= \prod_{0 \leq k \leq n/2, \gcd(n,k) = 1} \left( -2 - 2\cos \frac{2\pi k}{n} \right) \\
&= 
(-1)^{n/4} \prod_{0 \leq k \leq n/2, \gcd(n,k) = 1} (2 + \zeta^{k} + \zeta^{-k})\\
&= (-1)^{n/4} \prod_{0 \leq k \leq n/2, \gcd(n,k) = 1} (1 +\zeta^k)(1+\zeta^{-k}) \\
&= (-1)^{n/4} \Phi_n(-1)
\end{align*}
where $\Phi_n(x) = x^{n/2} + 1$ is the $n$-th cyclotomic polynomial. Since evidently $\Phi_n(-1) = 2$, this proves the claim.
\end{proof}

\begin{lemma} \label{lem:Newton polygon}
Let $A_n$ be the abelian variety with real Weil polynomial $P_n(3-x)$.
\begin{enumerate}
\item[(a)] If $n$ is not a power of $2$, then $A_n$ is ordinary.
\item[(b)] If $n$ is a power of $2$, then the Newton polygon of $A_n$ has all slopes equal to $1/2^m$ or $1-1/2^m$ where $m = \max\{1, \log_2(n)-1\}$.
\end{enumerate}
\end{lemma}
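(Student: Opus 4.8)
The plan is to read off the Newton polygon of $A_n$ from the $2$-adic valuations of the roots of the real Weil polynomial $R(x) = P_n(3-x)$. By \eqref{eq:Weil poly to real Weil poly} the Weil polynomial of $A_n$ is $Q_n(x) = x^{\deg R}\,R(x + 2x^{-1})$, so each root $\pi$ of $Q_n$ satisfies $\pi + 2\pi^{-1} = r$ for a root $r$ of $R$, together with $\pi\cdot(2\pi^{-1}) = 2$. Fixing the valuation $v_2$ on $\overline{\QQ}_2$ with $v_2(2) = 1$, this gives $v_2(\pi) + v_2(2\pi^{-1}) = 1$ and $v_2(\pi + 2\pi^{-1}) = v_2(r)$, and a short case distinction on the size of $v_2(r)$ shows that the two roots of $Q_n$ lying over $r$ have valuations $\{t,\, 1-t\}$ with $t := \min(v_2(r),\,1/2)$. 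Since the roots of $R$ are algebraic integers, $v_2(r) \ge 0$ for all of them and $\sum_r v_2(r) = v_2(R(0)) = v_2(P_n(3))$. Two consequences: $A_n$ is ordinary exactly when every root of $R$ is a $2$-adic unit, i.e.\ when $P_n(3)$ is odd; and if all roots of $R$ share a common valuation $c$, then every slope of $A_n$ equals $\min(c,1/2)$ or $1-\min(c,1/2)$.

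For part (a), use the factorization $P_n(y) = \prod_k(y^2 - s_k y + 1)$ with $s_k = 4 + \zeta^k + \zeta^{-k}$ and $\zeta = \zeta_n$. Reducing the identity $P_n(3) = \prod_k(10 - 3s_k)$ modulo a prime of $\ZZ[\zeta]$ above $2$ gives $P_n(3) \equiv \prod_k \bar{s}_k \pmod 2$, where $\bar{s}_k = \bar{\zeta}^k + \bar{\zeta}^{-k} \in \overline{\FF}_2$ (the integer $4$ reduces to $0$). If $n$ is not a power of $2$, its odd part $n'$ is $>1$; since powers-of-$2$ roots of unity are trivial in characteristic $2$ while the image $\bar\zeta$ of $\zeta_n$ has exact order $n'$, and $\gcd(k,n)=1$ forces $n' \nmid k$, we get $\bar\zeta^k \ne 1$ and hence $\bar{s}_k = \bar{\zeta}^{-k}(\bar{\zeta}^k + 1)^2 \ne 0$. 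Thus $P_n(3)$ is odd and $A_n$ is ordinary.

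For part (b), write $n = 2^a$. When $a \le 1$ one computes directly that $R(x) = P_1(3-x) = x^2 - 8$ or $R(x) = P_2(3-x) = (x-2)^2$, each having all roots of valuation $\ge 1/2$; by the first paragraph all slopes of $A_n$ then equal $1/2$, matching $\max\{1,\log_2 n - 1\} = 1$. Now let $a \ge 2$, so $\deg P_n = \phi(n) = 2^{a-1}$. Reducing $P_n(y) = \prod_k(y^2 - s_k y + 1)$ modulo a prime above $2$ and using that every $2^a$-th root of unity reduces to $1$ gives $\bar{s}_k = 0$, hence $\bar{P}_n(y) = (y^2+1)^{\phi(n)/2} = (y+1)^{\phi(n)}$ in $\FF_2[y]$, and therefore $\bar{R}(x) = \bar{P}_n(3-x) = (4-x)^{\phi(n)} = x^{\phi(n)}$; so every non-leading coefficient of $R$ is even. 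To locate the constant term exactly, write $P_n(y) = (y-1)^{\phi(n)} + 2h(y)$ with $h \in \ZZ[y]$ (possible since $\bar{P}_n(y) = (y-1)^{\phi(n)}$); then $P_n(1) = 2h(1)$, so Lemma~\ref{lem:Pn value at 1} shows $h(1) = \pm 1$ is odd, and as $\phi(n) \ge 2$ we get $P_n(3) = 2^{\phi(n)} + 2h(3) \equiv 2h(3) \equiv 2 \pmod 4$, i.e.\ $v_2(R(0)) = v_2(P_n(3)) = 1$. Consequently the Newton polygon of $R$ over $\QQ_2$ is the single segment joining $(0,1)$ to $(\phi(n),0)$, so every root of $R$ has valuation $1/\phi(n) = 1/2^{a-1}$, and by the first paragraph every slope of $A_n$ is $1/2^{a-1}$ or $1 - 1/2^{a-1}$. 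This is the assertion, since $\max\{1,\log_2 n - 1\} = a-1$ for $a \ge 2$.

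The only delicate point is establishing $v_2(P_n(3)) = 1$ exactly in part (b): the reduction $\bar{P}_n(y) = (y+1)^{\phi(n)}$ only gives $v_2(P_n(3)) \ge 1$, and it is precisely the sharp value $P_n(1) = \pm 2$ from Lemma~\ref{lem:Pn value at 1} that upgrades this to an equality. The remaining ingredients — the dichotomy linking slopes of $A_n$ to valuations of roots of $R$, and the mod-$2$ evaluation of $P_n$ via its factored form — are routine.
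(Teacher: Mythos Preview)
Your proof is correct and follows essentially the same route as the paper: both arguments reduce to the $2$-adic valuations of the roots of $P_n(3-x)$, handle part~(a) by reducing modulo a prime above $2$ (the paper phrases this as ``each root of $P_n(x)$ is congruent to a primitive $j$-th root of unity''), and handle part~(b) by showing $P_n(3-x)$ is Eisenstein at $2$ using Lemma~\ref{lem:Pn value at 1}. Your derivation of $v_2(P_n(3))=1$ via the decomposition $P_n(y)=(y-1)^{\phi(n)}+2h(y)$ and the congruence $h(3)\equiv h(1)\pmod 2$ simply makes explicit the step the paper compresses into ``adding Lemma~\ref{lem:Pn value at 1} yields that $P_n(3-x)$ is an Eisenstein polynomial.''
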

\begin{proof}
Write $n = 2^m j$ with $j$ odd. 
Over the ring of algebraic integers, \eqref{eq:madan-pal polynomial} implies
\begin{equation} \label{eq:Pn mod 2 congruence}
P_n(x) \equiv \prod_{0 \leq k \leq n/2, \gcd(n,k) = 1} \left( x^2 - (2 \cos \frac{2\pi k}{n})x + 1\right) \equiv \Phi_n(x)
\pmod{2};
\end{equation}
in particular, each root of $P_n(x)$ is congruent modulo a prime above 2 to a primitive $j$-th root of unity.
In case (a), this implies that none of the roots of $P_n(3-x)$ have positive valuation,
as each root is congruent modulo a prime above 2 to $1 - \eta$ where $\eta$ is a primitive $j$-th root of unity for some $j>1$; consequently, $A_n$ is ordinary. In case (b), it implies that all of the roots of $P_n(3-x)$ have positive valuation;
this automatically implies the claim for $m=0,1$.
For $m \geq 2$, we also need to know that $P_n(3) \equiv 2 \pmod{4}$
in order to deduce that $P_n(3-x)$ is an Eisenstein polynomial at 2;
this follows by applying Lemma~\ref{lem:Pn value at 1} and \eqref{eq:Pn mod 2 congruence}
to obtain
\[
P_n(3) \equiv P_n(1) + 2 P'_n(1) \equiv P_n(1) + 2 \Phi'_n(1) \equiv 2 \pmod{4}. \qedhere
\]
\end{proof}

\section{Reduction to an equation in roots of unity}

Let $A_n$ be an abelian variety over $\FF_2$ with real Weil polynomial $P_n(3-x)$. In light of Lemma~\ref{lem:geometrically simple criterion},
the key step in the proof of Theorem~\ref{T:main} will be to determine when the ratio of two Frobenius eigenvalues of $A_n$ can equal a nontrivial root of unity. The following lemmas reduce this to a tractable problem, which we solve in the remainder of the paper.

\begin{lemma} \label{lem:MP eigenvalue}
For any positive integer $n$, the Frobenius eigenvalues of $A_n$
are precisely the numbers $\alpha$ satisfying an equation of the form
\begin{equation} \label{eq:MP eigenvalue}
\alpha - 2\eta \alpha^{-1} + \eta - 1 = 0
\end{equation}
for some primitive $n$-th root of unity $\eta$.
\end{lemma}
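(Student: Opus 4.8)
The plan is to work entirely with the Weil polynomial $Q(x)$ of $A_n$, whose roots are exactly the Frobenius eigenvalues. Writing $R(x) = P_n(3-x)$ for the real Weil polynomial of $A_n$, equation \eqref{eq:Weil poly to real Weil poly} (with $q=2$) gives $Q(x) = x^{\deg R}\, R(x + 2x^{-1})$, so the task reduces to factoring $Q(x)$ into quadratics and reading off its roots.

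First I would substitute $x \mapsto 3-x$ in \eqref{eq:madan-pal polynomial}: a one-line computation rewrites each factor $x^2 - (4 + 2\cos\tfrac{2\pi k}{n})x + 1$ of $P_n$ as $x^2 + (u_k - 2)x - (2 + 3u_k)$, where $u_k = 2\cos\tfrac{2\pi k}{n} = \eta_k + \eta_k^{-1}$ and $\eta_k = e^{2\pi i k/n}$ (a primitive $n$-th root of unity, since $\gcd(n,k)=1$). Thus $R(x)$ is the product of these quadratics over $0 \le k \le n/2$ with $\gcd(n,k)=1$.

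The crux is the algebraic identity
\[
x^2\Bigl[(x + 2x^{-1})^2 + (u-2)(x + 2x^{-1}) - (2 + 3u)\Bigr] = \bigl(x^2 + (\eta-1)x - 2\eta\bigr)\bigl(x^2 + (\eta^{-1}-1)x - 2\eta^{-1}\bigr),
\]
valid whenever $u = \eta + \eta^{-1}$; I would verify it by checking that both sides expand to $x^4 + (u-2)x^3 + (2-3u)x^2 + (2u-4)x + 4$. Applying this factor by factor to $Q(x) = x^{\deg R}\, R(x + 2x^{-1})$ expresses $Q(x)$, over the same index set, as a product of the quadratics $x^2 + (\eta_k - 1)x - 2\eta_k$ and $x^2 + (\eta_k^{-1} - 1)x - 2\eta_k^{-1}$.

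Finally, as $k$ runs over $\{0 \le k \le n/2 : \gcd(n,k)=1\}$, the numbers $\eta_k$ together with their inverses enumerate the primitive $n$-th roots of unity exactly once each (with $\eta_k = \eta_k^{-1}$ when $n \le 2$, in which case the two quadratic factors coincide and the multiplicities still work out). Hence the roots of $Q(x)$ are precisely the $\alpha$ with $\alpha^2 + (\eta-1)\alpha - 2\eta = 0$ for some primitive $n$-th root of unity $\eta$; since the constant term $-2\eta$ is nonzero, any such $\alpha$ is nonzero, and dividing by $\alpha$ turns this into \eqref{eq:MP eigenvalue}. The only genuinely delicate points are getting the signs right in the factorization identity and noting the degenerate behavior of the product in \eqref{eq:madan-pal polynomial} for $n \in \{1,2\}$; beyond that the argument is mechanical.
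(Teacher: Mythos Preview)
Your argument is correct and essentially the same as the paper's: both hinge on the factorization
\[
x^2\bigl[(x+2x^{-1})^2 + (u-2)(x+2x^{-1}) - (2+3u)\bigr] = \bigl(x^2 + (\eta-1)x - 2\eta\bigr)\bigl(x^2 + (\eta^{-1}-1)x - 2\eta^{-1}\bigr),
\]
which is exactly the content of the paper's ``simplifies to'' step (the paper works with a single root $\alpha$ and factors the resulting equation, while you factor $Q(x)$ globally---your presentation makes the ``precisely'' in the statement slightly more transparent, but the underlying identity is the same).
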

\begin{proof}
By definition, $\alpha$ is a Frobenius eigenvalue of $A_n$ if and only if 
$\alpha + 2\alpha^{-1}$ is a root of the real Weil polynomial $P_n(3-x)$. From \eqref{eq:madan-pal polynomial}, the latter condition means that there exists a primitive $n$-th root of unity $\eta$ such that
\[
(3 - \alpha - 2\alpha^{-1}) + (3 - \alpha - 2\alpha^{-1})^{-1}
= 4 + \eta + \eta^{-1}.
\]
This equation simplifies to
\[
(\alpha - 2\eta \alpha^{-1} + \eta - 1)
(\alpha - 2\eta^{-1} \alpha^{-1} + \eta^{-1} - 1) = 0,
\]
and so holds if and only if 
one of the two factors on the left vanishes.
If it is the first factor, then \eqref{eq:MP eigenvalue} holds;
otherwise, \eqref{eq:MP eigenvalue} becomes true after we replace $\eta$ with $\eta^{-1}$.
\end{proof}

\begin{lemma} \label{lem:roots of unity ratio}
Let $\alpha_1, \alpha_2$ be distinct Frobenius eigenvalues of $A_n$.
If $\eta_3 = \alpha_1/\alpha_2$ is a root of unity (of any order), then there exist roots of unity $\eta_1, \eta_2$ of order $n$ such that
$g(\eta_1, \eta_2, \eta_3) = 0$, where
\begin{align} \label{eq:equation for g}
g(z_1, z_2, z_3) &= z_1 + z_1^{-1} + z_2 + z_2^{-1} \\
\nonumber &\quad + z_3 + z_3^{-1} - z_1 z_3^{-1} - z_1^{-1} z_3 - z_2 z_3^{-1} - z_2^{-1} z_3 + z_1 z_2 z_3^{-1} + z_1^{-1} z_2^{-1} z_3\\ 
 \nonumber
&\quad  - 2 z_1 z_2 z_3^{-2} - 2 z_1^{-1} z_2^{-1} z_3^2
\end{align}
as an element of the Laurent polynomial ring $R = \ZZ[z_1^{\pm},z_2^{\pm}, z_3^{\pm}]$.
\end{lemma}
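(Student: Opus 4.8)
The plan is to combine the two instances of \eqref{eq:MP eigenvalue} guaranteed by Lemma~\ref{lem:MP eigenvalue} (one for $\alpha_1$, one for $\alpha_2$) with the relation $\alpha_1 = \eta_3\alpha_2$, eliminate $\alpha_2$, and then use that a Frobenius eigenvalue of an abelian variety over $\FF_2$ has absolute value $\sqrt 2$ to pass to an identity purely among roots of unity. Concretely: by Lemma~\ref{lem:MP eigenvalue}, choose primitive $n$-th roots of unity $\zeta_1, \zeta_2$ with $\alpha_j^2 = (1-\zeta_j)\alpha_j + 2\zeta_j$ for $j = 1,2$. Since $A_n$ is defined over $\FF_2$ we have $|\alpha_j| = \sqrt 2$, hence $\overline{\alpha_j} = 2\alpha_j^{-1}$; in particular $\overline{\eta_3} = \overline{\alpha_1}/\overline{\alpha_2} = \alpha_2/\alpha_1 = \eta_3^{-1}$, and of course $\overline{\zeta_j} = \zeta_j^{-1}$.

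Substituting $\alpha_1 = \eta_3\alpha_2$ into the $j=1$ relation gives $\eta_3^2\alpha_2^2 = (1-\zeta_1)\eta_3\alpha_2 + 2\zeta_1$; subtracting $\eta_3^2$ times the $j=2$ relation cancels the $\alpha_2^2$ term, and after dividing by $\eta_3$ one is left with the \emph{linear} relation $D\alpha_2 = 2M$, where
\[
D = \zeta_1 - 1 + \eta_3 - \zeta_2\eta_3, \qquad M = \zeta_1\eta_3^{-1} - \zeta_2\eta_3 .
\]
Conjugating (using that $D, M$ are Laurent polynomials with integer coefficients evaluated at roots of unity) gives $\overline D\,\overline{\alpha_2} = 2\overline M$, where $\overline D, \overline M$ denote $D, M$ with each of $\zeta_1, \zeta_2, \eta_3$ replaced by its inverse. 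Multiplying the two linear relations and using $\alpha_2\overline{\alpha_2} = |\alpha_2|^2 = 2$ produces
\[
D\,\overline D = 2\,M\,\overline M .
\]
Since nothing in this chain divides by $D$, no case analysis is required.

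It then remains to recognize the left-hand side. Expanding $D\overline D$ and $M\overline M$ as Laurent polynomials in $\zeta_1, \zeta_2, \eta_3$ gives the identity $D\overline D - 2M\overline M = -\,g(\zeta_1, \zeta_2^{-1}, \eta_3)$, so the previous display becomes $g(\zeta_1, \zeta_2^{-1}, \eta_3) = 0$. Taking $\eta_1 = \zeta_1$ and $\eta_2 = \zeta_2^{-1}$, which are again roots of unity of order $n$, completes the argument.

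The step I expect to be genuinely delicate---and the only one that is not bookkeeping---is the last identity. It is a finite expansion, which I would double-check in \SageMath, but it is exactly what forces the monomial list of $g$ and the coefficients $-2$. The other thing to watch is the asymmetry $\zeta_2 \mapsto \zeta_2^{-1}$: once one conjugates, the two eigenvalues feed into $g$ through inverse-related roots of unity, which is why the conclusion is phrased as the mere existence of suitable $\eta_1, \eta_2$ of order $n$.
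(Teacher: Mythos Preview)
Your proof is correct and takes essentially the same route as the paper: apply Lemma~\ref{lem:MP eigenvalue} to each $\alpha_j$, substitute $\alpha_1=\eta_3\alpha_2$, and eliminate the remaining eigenvalue. The only difference is in the execution of the elimination---the paper simply computes the resultant of the two quadratics in $\alpha_2$ (and preemptively labels the root of unity attached to $\alpha_2$ as $\eta_2^{-1}$, which is exactly your final substitution $\eta_2=\zeta_2^{-1}$), whereas you subtract to a linear relation $D\alpha_2=2M$, pair it with its complex conjugate via $\alpha_2\overline{\alpha_2}=2$, and multiply; the identity $D\overline D-2M\overline M=-g(\zeta_1,\zeta_2^{-1},\eta_3)$ does indeed check.
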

\begin{proof}
By Lemma~\ref{lem:MP eigenvalue}, there exist roots of unity $\eta_1, \eta_2$ of order $n$ such that
\[
\alpha_1 - 2 \eta_1 \alpha_1^{-1} + \eta_1 - 1 = \alpha_2 - 2 \eta_2^{-1} \alpha_2^{-1} + \eta_2^{-1} - 1 = 0.
\]
We obtain \eqref{eq:equation for g} by substituting $\alpha_1 = \alpha_2 \eta_3$ in the first equation to obtain
\begin{align*}
0 &= \alpha_2^2\eta_3^2 + \alpha_2 \eta_3(\eta_1-1) - 2 \eta_1\\
0 &=  \alpha_2^2 + \alpha_2 (\eta_2^{-1} - 1) - 2 \eta_2^{-1},
\end{align*}
then eliminating $\alpha_2$. This is an easy resultant computation in \SageMath{} but can also be done by hand: subtract $\eta_3^2$ times the second equation from the first to obtain
\[
(\eta_3 + \eta_2^{-1} \eta_3^2 - \eta_3^2 - \eta_1 \eta_3) \alpha_2 = 2 \eta_2^{-1} \eta_3^2 - 2\eta_1,
\]
then substitute for $\alpha_2$.
\end{proof}

\section{Additive relations among roots of unity}
\label{sec:relations}

Our approach to solving the equation $g(\eta_1,\eta_2,\eta_3) = 0$ from Lemma~\ref{lem:roots of unity ratio}
is to treat it as an additive relation among 16 roots of unity.
A strategy for analyzing such relations was described by Conway--Jones \cite{conway-jones} and used thereafter by numerous authors; a recent example is \cite{space-vectors}, whose notation and terminology we follow (with some improvements as noted in the introduction).

Let $\mu$ be the multiplicative group of roots of unity in $\CC$ and let $\ZZ[\mu] \subset \CC$ be the ring of cyclotomic integers. For $N$ a positive integer, let $\mu_N$ be the subgroup of $\mu$ generated by $\zeta_N = e^{2 \pi i/N}$.
By a \emph{cyclotomic relation}, we will mean a multisubset (subset with multiplicity) of $\mu$ with sum zero; this corresponds to the notion of a \emph{sorou} (acronym for \emph{sum of roots of unity}) in \cite{relations}.
By a \emph{mod $2$ cyclotomic relation}, we will mean a multisubset of $\mu$ with sum divisible by 2 in $\ZZ[\mu]$. 

A cyclotomic relation (resp. a mod 2 cyclotomic relation) is \emph{indecomposable} if it is nonempty and cannot be partitioned into two nonempty cyclotomic relations (resp. mod 2 cyclotomic relations).
The cardinality of a cyclotomic relation (resp. a mod 2 cyclotomic relation) is also called its \emph{weight}.

\begin{theorem} \label{T:relations of weight 16}
Every indecomposable cyclotomic relation of weight at most $20$ is a subset of $\mu$; that is, its elements are pairwise distinct.
(The bound is best possible; see Remark~\ref{rem:weight 21 example}.)
\end{theorem}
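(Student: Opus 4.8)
The statement is \cite[Theorem~4.3]{relations}, and at the level of this paper it is simply cited; we outline the approach behind it, which is the one introduced by Conway--Jones \cite{conway-jones} (building on Mann), sharpened by Poonen--Rubinstein \cite{poonen-rubinstein} and then by \cite{relations}. Given an indecomposable cyclotomic relation $S$ of weight $w \le 20$, one first normalizes: divide every element of $S$ by a fixed element of its support, so that $1 \in \mathrm{supp}(S)$, and let $N$ be the conductor, i.e.\ the least $N$ with $\mathrm{supp}(S) \subseteq \mu_N$. The two tasks are then to bound $N$ and, for each admissible $N$, to enumerate all relations of weight $\le 20$.

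For the conductor bound, a theorem of Mann forces every prime dividing $N$ to be at most $w$. The Conway--Jones descent refines this: fixing a prime $p \mid N$, expand $S$ in the $\ZZ[\mu_{N/p}]$-basis $1, \zeta_p, \dots, \zeta_p^{p-2}$ of $\ZZ[\mu_N]$; the relation $\sum_\zeta m_\zeta \zeta = 0$ then becomes a system of cyclotomic relations of smaller conductor, once one first strips off the full cosets of the order-$p$ subgroup of $\mu_N$ on which all $p$ multiplicities coincide. Each such stripped coset is itself a cyclotomic relation, so indecomposability of $S$ forces $S$ either to equal one such coset (hence to be a rotated copy of $R_p := \{1, \zeta_p, \zeta_p^2, \dots, \zeta_p^{p-1}\}$, and we are done) or to contain no such coset for any $p \mid N$. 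Tracking weights through this descent leaves only finitely many possibilities for $N$, all squarefree.

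For each admissible $N$, the cyclotomic relations with support in $\mu_N$ are exactly the nonnegative integer solutions $(m_\zeta)_{\zeta \in \mu_N}$ of the linear system expressing $\sum_\zeta m_\zeta \zeta = 0$ under the identification $\ZZ[\mu_N] \cong \ZZ^{\phi(N)}$; these form a finitely generated monoid, one enumerates the finitely many members of weight $\le 20$, discards the decomposable ones, and checks that every indecomposable one has all $m_\zeta \in \{0,1\}$. That the bound $20$ is sharp follows by exhibiting an indecomposable relation of weight $21$ with a repeated element (Remark~\ref{rem:weight 21 example}).

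The main obstacle is the enumeration: for the larger admissible conductors the lattice $\ZZ[\mu_N] \cong \ZZ^{\phi(N)}$ has substantial rank, so a brute-force search over relations of weight $\le 20$ is infeasible, and one must instead exploit the recursion from the conductor bound --- relations of conductor $N$ are built from relations of conductor $N/p$ --- together with careful pruning; this is exactly the technical advance of \cite{relations} over \cite{poonen-rubinstein}. A secondary subtlety is to handle the notion of indecomposability correctly: an indecomposable relation need not be a single rotated copy of some $R_p$ but can be a genuine superposition of several relations (including ones of composite conductor), so the enumeration must account for combinations, not merely the ``minimal'' relations in the narrowest sense.
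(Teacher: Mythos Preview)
Your proposal is correct and matches the paper's approach: both simply defer to the classification in \cite{relations} (the paper cites it as Theorem~3.3 there rather than Theorem~4.3, so you may want to double-check the numbering), and you go a step further by sketching the Conway--Jones/Mann descent that underlies that classification. The additional exposition is accurate and helpful, though strictly beyond what the paper itself supplies.
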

\begin{proof}
This follows from the explicit classification of minimal cyclotomic relations of weight at most 20 \cite[Theorem~4.3]{relations}. For the case of weight at most $12$, see also \cite[Theorem~3.1]{poonen-rubinstein}.
\end{proof}

For smaller weight, we will also need the explicit forms of such sequences.
\begin{theorem} \label{T:relations of weight 8}
Let $S$ be an indecomposable cyclotomic relation of weight at most $8$.
Then then there exists $\zeta \in \mu$ such that $\zeta^{-1} S = \{\zeta^{-1} \eta: \eta \in S\}$ appears in
Table~\ref{table:minimal relations}.
\end{theorem}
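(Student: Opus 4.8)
The plan is to build the statement out of two ingredients already available: the fact that small-weight indecomposable relations are sets, and Mann's bound on the cyclotomic field a minimal relation can live in. First I would invoke Theorem~\ref{T:relations of weight 16} to conclude that $S$, having weight at most $8 \le 20$, consists of pairwise distinct roots of unity. So $S = \{\eta_1,\dots,\eta_w\}$ with $w \le 8$, $\sum_i \eta_i = 0$, and — because for a set ``indecomposable'' is the same as having no proper nonempty subrelation — no proper nonempty subset of $S$ sums to zero. Replacing $S$ by $\eta_1^{-1}S$, I may assume $1 \in S$; this is the scaling by $\zeta$ permitted in the statement, and further such scalings will be used at the very end to put the survivors in the exact normalization of Table~\ref{table:minimal relations}.

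Next I would bound the field. By Mann's theorem on vanishing sums of roots of unity, a minimal vanishing sum of $w$ roots of unity, normalized so that one of its terms is $1$, has all of its terms in $\mu_N$ for some squarefree $N$ all of whose prime factors are at most $w$; since $w \le 8$ this gives $N \mid 2\cdot 3\cdot 5\cdot 7 = 210$. (One can shorten the list of admissible $N$ using the sharper combinatorial inequalities of Conway--Jones \cite{conway-jones} relating $w$ to the primes dividing $N$, but that refinement is not logically required.) The problem is now finite: for each admissible $N$, enumerate the subsets of $\mu_N$ of size at most $8$ that contain $1$, and retain those with zero sum and no vanishing proper subset.

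Finally I would carry out this enumeration and check that, up to rotation, the list of survivors is exactly Table~\ref{table:minimal relations}. The economical way to do this by hand is the standard Conway--Jones reduction: if $p$ is the largest prime dividing $N$, then either $S$ is (up to rotation) the basic relation $1 + \zeta_p + \cdots + \zeta_p^{\,p-1}$ of weight $p$, or $S$ contains a rotated copy of such a basic relation and removing it yields a relation of strictly smaller weight supported on $\mu_{N/p}$, so one recurses on $w$ and on the number of prime factors of $N$; alternatively the same search is performed directly in \SageMath\ in the accompanying notebooks~\cite{repo}. Either way the output coincides with the truncation to weight $\le 8$ of the published classifications of minimal cyclotomic relations \cite[Theorem~3.1]{poonen-rubinstein}, \cite[Theorem~3.3]{relations}, and citing that truncation is arguably the cleanest justification of the theorem. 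The main obstacle is the bookkeeping in the inductive step — controlling how a basic sub-relation can overlap the rest of $S$ and ruling out the resulting sporadic configurations — which is precisely where the combinatorial lemmas of Conway--Jones are needed; once those are in hand the remaining verifications are routine.
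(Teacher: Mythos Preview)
The paper does not prove this theorem at all: it simply records that the classification is due to W\l odarski \cite{wlodarski}, extended to weight $9$ by Conway--Jones \cite{conway-jones}, with notation following \cite{poonen-rubinstein}. Your proposal is therefore far more elaborate than what the paper offers, and insofar as it ultimately defers to \cite{poonen-rubinstein} and \cite{relations}, it lands in the same place.

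That said, your sketch of an independent argument contains a concrete error. You claim that if $p$ is the largest prime dividing $N$, then either $S$ is a rotated $R_p$, or ``$S$ contains a rotated copy of such a basic relation and removing it yields a relation of strictly smaller weight supported on $\mu_{N/p}$.'' This is false already for $(R_5{:}R_3) = \{\zeta_5, \zeta_5^2, \zeta_5^3, \zeta_5^4, -\zeta_3, -\zeta_3^2\}$: here $p = 5$, but no five of these six elements form a rotated $R_5$ (nor do any three form a rotated $R_3$). The actual Conway--Jones mechanism is subtler: one partitions $S$ according to the $p$ cosets of $\mu_{N/p}$, shows the coset sums are all equal, and then produces smaller auxiliary relations by a substitution procedure --- not by excising a sub-$R_p$ from $S$. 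Your invocation of Mann's bound $N \mid 210$ and the finite-search fallback are correct, so the overall plan is salvageable, but the inductive step as you describe it would not run.
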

\begin{table}[ht]
\begin{tabular}{c|c|c}
$n$ & Type & Relation \\ \hline
2 & $R_2$ & $1, -1$ \\ \hline
3 & $R_3$ & $1, \zeta_3, \zeta_3^2$ \\ \hline
5 & $R_5$ & $1, \zeta_5, \zeta_5^2, \zeta_5^3, \zeta_5^4$ \\ \hline
6 & $(R_5:R_3)$ & $\zeta_5, \zeta_5^2, \zeta_5^3, \zeta_5^4, -\zeta_3, -\zeta_3^2$ \\ \hline
7 & $R_7$ & $1, \zeta_7, \zeta_7^2, \zeta_7^3, \zeta_7^4,\zeta_7^5,\zeta_7^6$ \\
7 & $(R_5:2R_3)$ & $1,\zeta_5^2, \zeta_5^3, -\zeta_3\zeta_5, -\zeta_3^2\zeta_5, -\zeta_3 \zeta_5^4, -\zeta_3^2 \zeta_5^4$ \\
7 & $(R_5:2R_3)$ & $1,\zeta_5, \zeta_5^4, -\zeta_3 \zeta_5^2, -\zeta_3^2 \zeta_5^2, -\zeta_3 \zeta_5^3, -\zeta_3^2 \zeta_5^3$\\ \hline
8 & $(R_5:3R_3)$ & $\zeta_5^2, \zeta_5^3, -\zeta_3, -\zeta_3^2, -\zeta_3 \zeta_5, -\zeta_3^2 \zeta_5, -\zeta_3 \zeta_5^4, -\zeta_3^2 \zeta_5^4$\\
8 & $(R_5:3R_3)$ & $\zeta_5, \zeta_5^4, -\zeta_3, -\zeta_3^2, -\zeta_3 \zeta_5^2, -\zeta_3^2 \zeta_5^2, -\zeta_3 \zeta_5^3, -\zeta_3^2 \zeta_5^3$\\
8 & $(R_7:R_3)$ & $\zeta_7, \zeta_7^2, \zeta_7^3, \zeta_7^4, \zeta_7^5, \zeta_7^6, -\zeta_3, -\zeta_3^2$
\end{tabular}
\caption{Indecomposable cyclotomic relations of weight at most $8$.}
\label{table:minimal relations}
\end{table}
\begin{proof}
The result is originally due to W\l odarski \cite{wlodarski} and was extended to weight $9$ by Conway--Jones \cite{conway-jones}.
Our notation follows \cite[Table~3.1]{poonen-rubinstein}.
\end{proof}

In many cases, one is particularly interested in cyclotomic relations which are stable under complex conjugation.
Any such relation can be partitioned into indecomposable relations, but not uniquely; hence it is not automatic that there is such a partition which is stable under complex conjugation.
We first address this question in the easier mod 2 setting, recalling \cite[Lemma~6.9]{space-vectors}; see Lemma~\ref{lem:conjugate stable relations} for an analogous statement for genuine cyclotomic relations.

\begin{lemma} \label{lem:conjugate stable relations mod 2}
Let $S$ be a mod $2$ cyclotomic relation (of any weight) which is stable under complex conjugation. Then $S$ can be partitioned into indecomposable mod $2$ cyclotomic relations in a manner which is itself stable under complex conjugation;  that is, the conjugate of each part is also a part.
\end{lemma}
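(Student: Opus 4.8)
The plan is to argue by induction on the weight of $S$, at each step reducing to the existence of a \emph{proper, nonempty, conjugation-stable} sub-mod-$2$-relation. To avoid multiset bookkeeping headaches I would first recast the data: represent the multiset $S$ as a finite set of ``tokens'' together with a value map $v$ into $\ZZ[\mu]$, and fix an involution $\sigma$ of the token set that lifts complex conjugation, i.e. $v(\sigma(t)) = \overline{v(t)}$. Such a $\sigma$ exists precisely because $S$ is conjugation-stable as a multiset (pair up tokens whose values are conjugate, fixing those with real value). A partition of the token set into indecomposable mod $2$ cyclotomic relations that is stable under $\sigma$ then descends to exactly the partition the lemma asks for.

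For the base/easy cases: if $S$ is empty we take the empty partition, and if $S$ is itself indecomposable then the one-part partition $\{S\}$ is already conjugation-stable since $\overline{S} = S$. So assume $S$ is decomposable and choose an indecomposable mod $2$ cyclotomic relation $C$ with $C \subsetneq S$ (take a minimal nonempty sub-relation of one part of some decomposition of $S$). Then $\sigma(C)$ is again an indecomposable mod $2$ cyclotomic relation lying inside $S$, and we split into cases according to how $C$ and $\sigma(C)$ interact. If $\sigma(C) = C$, then $C$ and $S \setminus C$ are both conjugation-stable mod $2$ cyclotomic relations of smaller weight, to which we apply the inductive hypothesis and concatenate. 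If $\sigma(C) \cap C = \emptyset$, then $C \sqcup \sigma(C)$ is conjugation-stable; if it equals $S$ we are done with the partition $\{C, \sigma(C)\}$, and otherwise we recurse on the (conjugation-stable, smaller) complement $S \setminus (C \sqcup \sigma(C))$ and adjoin $\{C, \sigma(C)\}$.

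The one genuinely delicate case — and the step I expect to be the main obstacle — is when $\sigma(C) \neq C$ but $C \cap \sigma(C) \neq \emptyset$: here neither $C$ nor $C \cup \sigma(C)$ is conjugation-stable, so we cannot split $S$ along either. The key observation is that the symmetric difference $X := C \triangle \sigma(C)$ resolves this: it is conjugation-stable because $\sigma$ interchanges $C$ and $\sigma(C)$; it is a mod $2$ cyclotomic relation because $\sum_{t \in X} v(t) \equiv \sum_{t \in C} v(t) + \sum_{t \in \sigma(C)} v(t) \equiv 0 \pmod 2$ by inclusion--exclusion; it is nonempty since $C \neq \sigma(C)$; and it is a \emph{proper} subset of $S$ since it omits the nonempty set $C \cap \sigma(C)$. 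Hence $X$ and $S \setminus X$ are conjugation-stable mod $2$ cyclotomic relations of strictly smaller weight, and applying the inductive hypothesis to each and combining finishes the proof. I anticipate that, apart from this symmetric-difference trick, the only care needed is in the translation between the multiset formulation and the token formulation; once one works consistently with tokens and $\sigma$, everything goes through routinely.
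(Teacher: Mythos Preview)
Your proposal is correct and follows essentially the same approach as the paper's proof: induct on weight, pick an indecomposable sub-relation $C$, and in the only nontrivial case (where $C$ and its conjugate overlap but are distinct) pass to the symmetric difference $C \triangle \sigma(C)$ and its complement. The paper's argument is terser and leaves the multiset bookkeeping implicit, whereas you spell out the token/$\sigma$ formalism; but the key symmetric-difference trick and the case split are identical.
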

\begin{proof}
We induct on the weight of $S$.
Let $T$ be an indecomposable mod 2 cyclotomic relation contained in $S$. Let $\overline{T}$ be the complex conjugate of $T$. If either $T = \overline{T}$ or $T \cap \overline{T} = \emptyset$, we apply the induction hypothesis to the complement of $T \cup \overline{T}$ in $S$; otherwise, we apply it to the symmetric difference $T \oplus \overline{T}$ and to its complement in $S$.
\end{proof}

The following is an extension of \cite[Theorem~6.7, Corollary~6.8]{space-vectors}. The bound is best possible; see again Remark~\ref{rem:weight 21 example}.
\begin{theorem} \label{T:lift minimal relation}
Let $S = \{\eta_1, \dots, \eta_m\}$ be a mod $2$ cyclotomic relation of weight at most $18$. Then there exist signs $\sigma_1,\dots,\sigma_m \in \{\pm 1\}$ such that $\sigma_1 \eta_1 + \cdots + \sigma_m \eta_m = 0$; that is, $S$ can be ``lifted'' to a cyclotomic relation.
\end{theorem}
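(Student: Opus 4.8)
The plan is to mimic the proof of the weight-$12$ statement \cite[Theorem~6.7, Corollary~6.8]{space-vectors}, the passage to weight $18$ being enabled by the two reinforced inputs from the introduction: the classification of indecomposable genuine cyclotomic relations through weight $20$ (Theorem~\ref{T:relations of weight 16}) and its mod-$2$ companion. First I would reduce to the case that $S$ is \emph{indecomposable} as a mod $2$ cyclotomic relation. Multiplying $S$ through by a root of unity changes neither hypothesis nor conclusion, so we may normalize (say so that $1 \in S$); and if $S = S_1 \sqcup S_2$ with each $S_i$ a nonempty mod $2$ cyclotomic relation, then any lifts of $S_1$ and $S_2$ concatenate to a lift of $S$. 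Iterating, and noting that every part of such a partition has weight at most that of $S$, it suffices to lift an indecomposable $S$; note also that such an $S$ contains no repeated element, since a repeated element would constitute a two-element mod $2$ sub-relation.

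The technical heart is a conductor bound for indecomposable mod $2$ cyclotomic relations. Write $\sum_{\eta \in S} \eta = 2\beta$ with $\beta \in \ZZ[\mu]$. The crucial combinatorial constraint is that $S$ contains no complete coset $\zeta \mu_p$ of $p$-th roots of unity for an odd prime $p$, because such a coset is a genuine --- hence mod $2$ --- sub-relation of $S$. This is exactly the constraint that powers the Conway--Jones analysis of genuine relations \cite{conway-jones}; feeding it into the effective bounds there, together with the mod-$2$ refinements of \cite{space-vectors}, one finds that the conductor of $S$ divides an explicit integer depending only on $n$, and that, after scaling, $\beta$ is a sum of only a few roots of unity --- few enough that $S$, together with a short multiset of roots of unity in which each element occurs an even number of times, forms a \emph{genuine} cyclotomic relation $\widetilde S$ of weight at most $20$. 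It is this threshold $20$, rather than $12$, that forces the use of the strengthened classification.

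Now Theorem~\ref{T:relations of weight 16} applies to $\widetilde S$: each of its indecomposable parts has pairwise distinct elements, while $\widetilde S$ itself (when $\beta \neq 0$) contains repeated elements by construction, so the decomposition of $\widetilde S$ into indecomposable parts must split the appended roots of unity among distinct parts. Tracking which part each $\eta_i \in S$ lands in then yields a sign assignment $\sigma_1, \dots, \sigma_n$ with $\sum_i \sigma_i \eta_i = 0$; when $\beta = 0$ there is nothing to do. The finitely many residual configurations --- in effect, the mod-$2$ analogue of Table~\ref{table:minimal relations} --- are enumerated and checked in \SageMath\ \cite{repo}. I expect the main obstacle to be the conductor bound itself and the size of the ensuing case analysis, which is only tractable because of the stronger inputs: without the weight-$20$ genuine classification the bound of the previous paragraph would be too weak. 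Finally, the weight bound $18$ is sharp by Remark~\ref{rem:weight 21 example}: the indecomposable genuine relation of weight $21$ constructed there has some root of unity $\omega$ of multiplicity $2$, and deleting both copies leaves a mod $2$ cyclotomic relation of weight $19$ that cannot be lifted --- a lift would yield a subset summing to $-\omega$, and adjoining $\omega$ to it would produce a proper nonempty sub-relation of the weight-$21$ relation, contradicting its indecomposability.
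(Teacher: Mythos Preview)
Your approach diverges from the paper's, and its central step is not justified. You assert that for an indecomposable mod $2$ relation $S$ of weight $n\le 18$, the half-sum $\beta=\tfrac12\sum S$ can be written as a sum of so few roots of unity that appending each one twice (negated) yields a genuine relation $\widetilde S$ of weight at most $20$. For $n=18$ this would force $\beta$ to be zero or a single root of unity; you offer no argument beyond invoking unspecified ``effective bounds'' and ``mod-$2$ refinements.'' Even granting such a representation, the step ``tracking which part each $\eta_i$ lands in then yields a sign assignment'' is not explained: with $k\ge 2$ appended pairs the indecomposable parts of $\widetilde S$ need not fall into two blocks, and producing a consistent choice of signs requires a further combinatorial argument (for instance a $2$-colouring of the parts compatible with all appended pairs) that you do not supply.

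The paper's proof is structurally different and, notably, does \emph{not} use Theorem~\ref{T:relations of weight 16} at all. It proceeds by induction on $n$, with $n\le 12$ taken from \cite[Theorem~6.7]{space-vectors} as base cases. For indecomposable $S$ with $n\ge 13$ one writes $S=\bigsqcup_{i=0}^{p-1}\zeta_p^i T_i$ along the largest prime $p$ of the (odd, squarefree) level. If some $T_i$ is a singleton, each $T_j\cup\{1\}$ is a mod $2$ relation of strictly smaller weight, and the induction hypothesis lifts them all compatibly. Otherwise every $\#T_i\ge 2$, which for $n\le 18$ forces $p=7$ and $T_i\subset\mu_{15}$; a short \SageMath\ enumeration of the admissible $T_i$ then checks directly that sign functions $f_i\colon T_i\to\{\pm 1\}$ with a common signed sum always exist. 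Your sharpness paragraph is correct, but the lifting argument itself needs either a genuine proof of the short-$\beta$ claim or replacement by the inductive $p$-decomposition the paper actually uses.
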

\begin{proof}
We proceed by induction on $m$, using \cite[Theorem~6.7]{space-vectors} to treat the cases $m \leq 12$ as base cases. For the induction step, we may assume that $S$ is indecomposable and $m \geq 13$.
Let $N$ be the \emph{level} of $S$ in the sense of \cite[\S 6]{space-vectors},
i.e., the smallest positive integer for which $S \subseteq \mu_N$.
We may assume that $S$ is \emph{minimal} in the sense that no rotation of $S$ (i.e., the product of $S$ with a root of unity) has level strictly less than $N$;
in this case $N$ is odd and squarefree \cite[Lemma~6.1]{space-vectors}.
Let $p$ be the largest prime factor of $N$; since $m > 10$ we must have $p \geq 7$. We can write $S$ as a disjoint union $\bigsqcup_{i=0}^{p-1} \zeta_p^i T_i$ where $T_i$ is a nonempty set of roots of unity whose orders are not divisible by $p$; the sums of the $T_i$ are pairwise congruent modulo 2 (see the proof of \cite[Lemma~6.4]{space-vectors}).

If some $T_i$ is empty, then each $T_i$ is itself a mod 2 cyclotomic relation; this gives a contradiction against either the hypothesis that $S$ is indecomposable (if there is more than one nonempty $T_i$) or the definition of the level (if only one $T_i$ is nonempty).

If some $T_i$ is a singleton, we may assume without loss of generality that $T_0 = \{1\}$. Then for each $i>0$, $T_i \cup \{1\}$ is itself a mod 2 cyclotomic relation of weight at most $n-p+1 < n$,
so by the induction hypothesis we can find a function $f_i: T_i \to \{\pm 1\}$ such that
\[
1 + \sum_{\eta \in T_i} f_i(\eta) \eta = 0.
\]
Combining these, we obtain a lift of $S$ to a cyclotomic relation.

We may thus assume that $\#T_i \geq 2$ for each $i$.
This rules out $p \geq 11$, as this would imply $m \geq \sum_i \#T_i \geq 22$;
we thus have $p = 7$. We also must have $\#T_i \leq 6$ for each $i$,
as otherwise $m \geq \sum_i \#T_i \geq 7 + 6 \times 2 = 21$.
We may assume without loss of generality that $T_0 = \{1, \zeta_N\}$ where
$N \in \{3,5,15\}$.
Then each $T_i$ consists of powers of $\zeta_{15}$, has sum congruent to $1+\zeta_N$ mod 2, and contains no mod 2 cyclotomic relation; it is straightforward to enumerate all sets satisfying these conditions (of cardinality at most 6) using \SageMath.

It now suffices to show that there exist functions $f_i: T_i \to \{\pm 1\}$ for $i=1,\dots,6$ such that the sums $\sum_{\eta \in T_i} f_i(\eta) \eta$ are either all equal to $1 + \zeta_N$ or all equal to $1-\zeta_N$. 
From the enumeration, we see that for $N \in \{3,5\}$, we can always 
achieve $\sum_{\eta \in T_i} f_i(\eta) \eta = 1 + \zeta_N$.
When $N = 15$, there do exist indices $i \neq j \in \{1,\dots,6\}$ such 
that for any functions $f_i: T_i \to \{\pm 1\}$, $f_j: T_j \to \{\pm 1\}$,
\[
\sum_{\eta \in T_i} f_i(\eta)\eta \neq 1 + \zeta_N, \qquad 
\sum_{\eta \in T_j} f_j(\eta)\eta \neq 1 - \zeta_N;
\]
however, for all such indices we have
\[
\#T_i \geq 5, \qquad \#T_j \geq 4
\]
and this cannot occur when $n \leq 18$.
\end{proof}

\begin{remark} \label{rem:weight 21 example}
In \cite[\S 2.2]{relations}, an example is given of an indecomposable cyclotomic relation of weight 21 with one pair of repeated elements. Omitting this pair gives a mod 2 cyclotomic relation of weight 19 which cannot be lifted to a cyclotomic relation; one can extend the proof of Theorem~\ref{T:lift minimal relation} to give a complete classification of such relations.
\end{remark}

\begin{remark} \label{rem:unique lift}
In Theorem~\ref{T:lift minimal relation}, if $S$ is indecomposable (mod 2), then the choice of the $\sigma_i$ is unique up to multiplying them all by $-1$,
by the following reasoning. Suppose that $\sigma'_1,\dots, \sigma'_m \in \{\pm 1\}$ is a second choice
for which $\sum_i \sigma'_i \eta_i = 0$.
Define the partition $\{1,\dots,m\} = J_1 \sqcup J_2$ by
\[
J_1 = \{i \in \{1,\dots,m\}: \sigma_i = \sigma'_i\}, \qquad
J_2 = \{i \in \{1,\dots,m\}: \sigma_i \neq \sigma'_i\};
\]

Then both $\{\sigma_i \eta_i: i \in J_1\}$ and $\{\sigma_i \eta_i: i \in J_2\}$ are cyclotomic relations, and hence mod 2 cyclotomic relations. Since $S$ is indecomposable as a mod 2 cyclotomic relation, this is only possible if $J_1 = \emptyset$ or $J_2 = \emptyset$.
\end{remark}

The following is an extension of \cite[Lemma~4.1]{poonen-rubinstein}, but with a different proof. See also Remark~\ref{rem:conjugate stable relations proof}.

\begin{lemma} \label{lem:conjugate stable relations}
Let $S$ be a cyclotomic relation of weight at most $18$ which is stable under complex conjugation. Then $S$ can be partitioned into indecomposable cyclotomic relations in a manner which is itself stable under complex conjugation; that is, the conjugate of each part is also a part.
\end{lemma}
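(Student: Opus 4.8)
The plan is to mimic the mod-2 argument of Lemma~\ref{lem:conjugate stable relations mod 2}, inducting on the weight of $S$, but to replace the trick ``take the symmetric difference $T \oplus \overline{T}$'' by an honest cyclotomic relation built from $T$ and $\overline{T}$. The obstruction is that for genuine relations there is no symmetric difference: if $T$ and $\overline{T}$ overlap in some element $\eta$, I cannot simply cancel one copy of $\eta$ against one copy of $\eta$, since $T \cup \overline{T}$ is a multiset and the two copies of $\eta$ may need to stay. So the first step is to set up the induction: let $T \subseteq S$ be an indecomposable cyclotomic relation and $\overline T$ its complex conjugate. If $T = \overline T$, or if $T$ and $\overline T$ are disjoint as multisubsets of $S$, then $T \sqcup \overline T$ is a conjugation-stable sub-relation of $S$, and I apply the induction hypothesis to its complement in $S$ (which has strictly smaller weight, unless $T = S$, in which case $S$ itself is already conjugation-stable and we are done). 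The remaining case is that $T \ne \overline T$ but they share at least one common element in $S$.

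In that remaining case the key observation is that $T \sqcup \overline T$ is a cyclotomic relation of weight $2\,\#T \le 18$ which is stable under complex conjugation, and I want to re-partition it. Here is where I would invoke Remark~\ref{rem:unique lift} together with Theorem~\ref{T:relations of weight 16}: by Theorem~\ref{T:relations of weight 16}, since $\#T \le 9$ (as $2\,\#T \le 18$), the indecomposable relation $T$ is actually a \emph{set}, i.e.\ its elements are pairwise distinct, and likewise $\overline T$. So the multiset $T \sqcup \overline T$ has each element with multiplicity at most $2$, and the elements of multiplicity $2$ are exactly those in $T \cap \overline T$ (as sets). Writing $U = T \cap \overline T$ (a set, conjugation-stable since it is the intersection of a set with its conjugate), the multiset $T \sqcup \overline T$ decomposes as the disjoint union of two copies of $U$ together with one copy each of $T \setminus U$ and $\overline T \setminus U$; and $(T\setminus U) \sqcup U$ is just $T$, likewise on the conjugate side, so $T \sqcup \overline T = T \sqcup \overline T$ tautologically — this is not yet a new partition.

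To actually get a conjugation-stable refinement I proceed as follows. Consider the relation $T' := (T \setminus U) \sqcup (\overline T \setminus U)$, obtained from $T \sqcup \overline T$ by deleting both copies of each element of $U$; equivalently, $T' = T \oplus \overline T$ is the symmetric difference \emph{as sets}, which is a perfectly good (possibly empty) multisubset of $\ZZ[\mu]$. I claim $T'$ is a cyclotomic relation: its sum is $(\sum_{\eta\in T}\eta) + (\sum_{\eta\in\overline T}\eta) - 2\sum_{\eta\in U}\eta = -2\sum_{\eta\in U}\eta$, which is in general \emph{not} zero. So $T'$ is only a mod-$2$ relation, not a cyclotomic one — and this is precisely the difficulty flagged above. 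The honest fix is to not try to split off $T\sqcup \overline T$ all at once. Instead, note $U$ is conjugation-stable and, as a subset of $T$, is disjoint from no element of $\overline T$ only in the elements of $U$ itself; the multiset $S \setminus (T \sqcup \overline T) \sqcup$ (two copies of $U$) may fail to be a relation. The clean route, which I would ultimately take, is: partition $S$ first into indecomposables arbitrarily, giving $S = \bigsqcup_j T_j$; conjugating gives another such partition $S = \bigsqcup_j \overline{T_j}$; and now run the standard ``pairing'' argument on the collection $\{T_j\}$ versus $\{\overline{T_j}\}$ viewed as matchings of the multiset $S$, using that each $T_j$ is a \emph{set} (Theorem~\ref{T:relations of weight 16}) and that by Remark~\ref{rem:unique lift} an indecomposable relation is determined by its underlying multiset up to the global sign — hence up to nothing at all, since the sign is fixed by the requirement that the sum be $0$ rather than merely the elements being scalar multiples. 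Concretely, I would show that the bipartite ``overlap'' multigraph on the two partitions has the property that each connected component, summed up, is a conjugation-stable cyclotomic sub-relation of $S$, and then refine within each component by induction on weight. The main obstacle, as indicated, is handling the overlap combinatorics without a symmetric-difference operation, and the crucial input that makes it work is Theorem~\ref{T:relations of weight 16} (indecomposable relations of weight $\le 18$ are sets) plus the uniqueness in Remark~\ref{rem:unique lift}.
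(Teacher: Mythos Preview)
Your proposal correctly identifies the obstruction---the symmetric-difference trick from the mod-$2$ argument does not survive to genuine cyclotomic relations---but does not actually overcome it. The ``bipartite overlap multigraph'' sketch at the end is not a proof: you say you would show that each connected component is a conjugation-stable sub-relation and then induct, but you never explain why the graph has more than one component. If $S$ decomposes as $T_1 \sqcup T_2$ with each $T_i$ indecomposable, it is entirely possible that $\overline{T_1}$ overlaps both $T_1$ and $T_2$ inside $S$, in which case the graph is connected and your induction makes no progress. Your invocation of Remark~\ref{rem:unique lift} is also off target: that remark concerns the uniqueness of a sign lift of a \emph{mod-$2$-indecomposable} relation, not any statement about how two different partitions of $S$ into indecomposable cyclotomic relations must relate to one another, so it does not supply the missing combinatorial control.

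The paper's argument avoids the overlap combinatorics entirely by taking a detour through mod $2$. First apply Lemma~\ref{lem:conjugate stable relations mod 2} to get a conjugation-stable partition of $S$ into indecomposable \emph{mod-$2$} relations (this is where the symmetric-difference trick is legal). Then use Theorem~\ref{T:lift minimal relation} to lift each part to a genuine cyclotomic relation via a choice of signs; by Remark~\ref{rem:unique lift} this lift is unique up to a global sign on each part, so conjugate parts lift to conjugate relations, and one obtains a conjugation-equivariant function $f\colon S \to \{\pm 1\}$ with $\sum_{\eta \in S} f(\eta)\eta = 0$. If $f$ is constant we are done; otherwise its level sets split $S$ into two strictly smaller conjugation-stable cyclotomic relations and the induction proceeds. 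The point you are missing is that the genuine work (Theorem~\ref{T:lift minimal relation}) happens \emph{after} the conjugation-stable partition has already been arranged at the mod-$2$ level, rather than trying to arrange conjugation-stability directly among indecomposable cyclotomic relations. (The paper does sketch, in Remark~\ref{rem:conjugate stable relations proof}, a more direct approach in the spirit of yours, but even there the overlap case is not handled by a general graph argument: it reduces to a finite enumeration of candidates from Table~\ref{table:minimal relations} and a computer check.)
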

\begin{proof}
We induct on the weight of $S$.
By Lemma~\ref{lem:conjugate stable relations mod 2} $S$ admits a conjugation-stable partition as a mod 2 cyclotomic relation. 
Apply Theorem~\ref{T:lift minimal relation} to lift each part to a genuine cyclotomic relation; by Remark~\ref{rem:unique lift} each of these lifts is unique up to an overall sign, so we can ensure that conjugate parts lift to conjugate relations. 

This yields a conjugation-equivariant function $f: S \to \{\pm 1\}$ such that
$\sum_{\eta \in S} f(\eta) \eta = 0$. If $f$ is constant, we have a partition of $S$ of the desired form; otherwise as in Remark~\ref{rem:unique lift} the level sets of $f$ partition $S$ nontrivially into two cyclotomic relations, each stable under complex conjugation.
\end{proof}

\begin{remark} \label{rem:conjugate stable relations proof}
In this paper, we will only need Lemma~\ref{lem:conjugate stable relations}
for weight at most 16. We describe an alternate proof of this restricted result in the style of the original proof of \cite[Lemma~4.1]{poonen-rubinstein}.

Let $T$ be an indecomposable cyclotomic relation of minimal weight contained in $S$. Let $\overline{T}$ be the complex conjugate of $T$; it will suffice to check that either $T = \overline{T}$ or $T \cap \overline{T} = \emptyset$,
as then we can remove $T \cup \overline{T}$ from $S$ and then apply the induction hypothesis to conclude.

If $T$ is of type $R_p$ for some prime $p$ in the sense of Table~\ref{table:minimal relations} (i.e., a collection of $p$ equally spaced roots of unity), then $\overline{T}$ is a rotation of $T$ and so the claim is evident. In particular, this covers all cases where $T$ has weight at most 5; since $S$ has weight at most $17$, this also covers all cases where $S$ can be partitioned into three or more indecomposable cyclotomic relations.

The remaining case is where the complement $T'$ of $T$ in $S$ is itself an indecomposable cyclotomic relation and both $T$ and $T'$ have weights in the range $\{6,7,8\}$. By Theorem~\ref{T:relations of weight 8}, 
there exists $\zeta \in \mu$ such that $\zeta^{-1} T$ is listed in Table~\ref{table:minimal relations}. Since $T \cap \overline{T} \neq \emptyset$, there must be two (not necessarily distinct) entries $\eta_i, \eta_j$ in the listed sequence such that $\zeta \eta_i = (\zeta \eta_j)^{-1}$, yielding
\[
\zeta^2 = \eta_i^{-1} \eta_j^{-1}.
\]
This limits $T$ to a computable finite list of options, and similarly for $T'$.

Write $T$ as the disjoint union of $T_1$ and $T_2$ where $T_1 = T \cap \overline{T}$, and similarly for $T'$. Then each of $T_2$ and $T'_2$ is disjoint from its complex conjugate, and yet together they are stable under complex conjugation; they must therefore be conjugates of each other. In particular, 
\[
\sum T_1 = -\sum T_2 = \overline{-\sum T'_2} = \overline{\sum T'_1} = \sum T'_1.
\]
We may thus check the claim by enumerating the candidates for $T$; for each $T$, computing the partition $T_1 \sqcup T_2$ and the sum $t = \sum T_1$; finding pairs $T \neq T'$ with matching values of $t$; and checking that for each such pair, the union $T \cup T'$ contains a cyclotomic relation which is either stable under complex conjugation or disjoint from its complex conjugate.
This computation takes a few minutes in \SageMath.
\end{remark}

\begin{remark} \label{remark:mod 2}
For $\eta_1, \eta_2 \in \mu$, if $\eta_1 - \eta_2$ has 2-adic valuation greater than 1 with respect to some prime above 2 in $\ZZ[\mu]$,
then $\eta_1 = \eta_2$. To see this, suppose by way of contradiction that $\eta_1/\eta_2$ has order $m>1$.  If $m = \ell^e$ for some prime $\ell$ and some positive integer $e$,
then by \cite[Theorem~10.1]{janusz} the 2-adic valuation of $\eta_1 - \eta_2$ is 0 if $\ell\neq 2$ or $1/\phi(m) \leq 1$ if $\ell = 2$. Otherwise, $\eta_1 - \eta_2$ is a unit in $\ZZ[\mu]$
\cite[\S I.10, Exercise 2]{janusz}.

If instead $\eta_1-\eta_2$ has 2-adic valuation greater than $1/2^n$ for some positive integer $n$, then $\eta_1^2 - \eta_2^2 = (\eta_1 - \eta_2)(\eta_1 - \eta_2 + 2\eta_2)$ has 2-adic valuation at least $1/2^{n-1}$.
By induction, we deduce that $\eta_1^{2^n} = \eta_2^{2^n}$.
\end{remark}

\section{Solving an equation in roots of unity}

We now apply the results from \S\ref{sec:relations}, plus the computation of torsion closures described in \cite[\S 7]{space-vectors}, to solve the equation $g(\eta_1,\eta_2,\eta_3) = 0$ from Lemma~\ref{lem:roots of unity ratio}.

We first note that $g$ is invariant under the group $G$ generated by the substitutions
\begin{equation} \label{eq:symmetries of equation}
(z_1, z_2, z_3) \mapsto (z_1^{-1}, z_2^{-1}, z_3^{-1}), (z_2, z_1, z_3), (z_1, z_2^{-1}, -z_1/z_3).
\end{equation}

We next apply the results of \S\ref{sec:relations}.
\begin{lemma} \label{L:roots of unity partition}
In the ring $R$, define the element $u = -z_1 z_2 z_3^{-2}$ and the subset
\[
S = \{z_1^{\pm 1}, z_2^{\pm 1}, z_3^{\pm 1}, -(z_1/z_3)^{\pm 1}, 
-(z_2/z_3)^{\pm 1}, (z_1 z_2/z_3)^{\pm 1} \}.
\]
Then for any $\eta_1, \eta_2, \eta_3 \in \mu$ with $g(\eta_1, \eta_2, \eta_3) = 0$, 
there exists an element $h \in R$ 
with $h(\eta_1, \eta_2, \eta_3) = 0$ of one of the following forms:
\begin{enumerate}
\item[(a)]
$h = \eta \pm 1$ for some $\eta \in S \cup \{u,u^{-1}\}$; or
\item[(b)]
$h = u + u^{-1} + \sum_{\eta \in T} \eta$ for some subset $T$ of $S$ with $\#T \leq 6$ which is invariant under the substition $z_i \mapsto z_i^{-1}$.
\end{enumerate}
\end{lemma}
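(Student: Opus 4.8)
The plan is to read the identity $g(\eta_1,\eta_2,\eta_3)=0$ as a cyclotomic relation and feed it into the machinery of \S\ref{sec:relations}. Since $g=\sum_{\mu\in S}\mu+2u+2u^{-1}$, substituting $(\eta_1,\eta_2,\eta_3)$ presents $0$ as the sum of a weight-$16$ multiset $V$: one copy of $\mu(\eta_1,\eta_2,\eta_3)$ for each of the twelve $\mu\in S$, together with two copies each of $u(\eta_1,\eta_2,\eta_3)$ and of $u^{-1}(\eta_1,\eta_2,\eta_3)$. As complex conjugation inverts roots of unity and $S=S^{-1}$, the multiset $V$ is stable under complex conjugation, so by Lemma~\ref{lem:conjugate stable relations} (valid since $16\le18$) it admits a conjugation-stable partition into indecomposable cyclotomic relations, each of which is a genuine set by Theorem~\ref{T:relations of weight 16} (valid since $16\le20$). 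As a preliminary reduction, if $\mu(\eta_1,\eta_2,\eta_3)=\pm1$ for some $\mu\in S\cup\{u,u^{-1}\}$ then $h=\mu\mp1$ is of form (a); so I assume this never happens, whence $u(\eta_1,\eta_2,\eta_3)\ne u^{-1}(\eta_1,\eta_2,\eta_3)$ and $V$ has at least two distinct parts meeting the value $u(\eta_1,\eta_2,\eta_3)$.

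I would then fix a part $P$ meeting $u(\eta_1,\eta_2,\eta_3)$, taken conjugation-stable if any such part is, and otherwise taken to avoid the value $u^{-1}(\eta_1,\eta_2,\eta_3)$ if possible. This gives a trichotomy. \emph{Case A} ($\overline P=P$): then $P$ also contains $u^{-1}(\eta_1,\eta_2,\eta_3)$, and since $P$ is a set, its remaining (distinct) elements are values $\mu(\eta_1,\eta_2,\eta_3)$ with $\mu\in S$; choosing one monomial per such value and its inverse for the conjugate value (possible since $S=S^{-1}$; a self-conjugate value would be $\pm1$, already excluded) produces $T\subseteq S$ with $T=T^{-1}$ such that $h=u+u^{-1}+\sum_{\mu\in T}\mu$ vanishes at $(\eta_1,\eta_2,\eta_3)$, which is form (b) as soon as $\#T=\#P-2\le6$. \emph{Case B} ($\overline P\ne P$, $u^{-1}(\eta_1,\eta_2,\eta_3)\notin P$): then $P\cup\overline P$ is a conjugation-stable cyclotomic relation with exactly one copy each of $u(\eta_1,\eta_2,\eta_3)$ and $u^{-1}(\eta_1,\eta_2,\eta_3)$ and otherwise only values of monomials from $S$, and the same construction (choosing, with multiplicity, distinct realizing monomials) gives form (b) as soon as $\#T=2\#P-2\le6$. \emph{Case C} ($\overline P\ne P$, $u^{-1}(\eta_1,\eta_2,\eta_3)\in P$): the delicate case, treated below. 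When the size bound fails in Case A ($\#P\ge9$) or Case B ($\#P\ge5$), a weight count—using in Case B that $V$ also contains a second part meeting $u(\eta_1,\eta_2,\eta_3)$ together with its conjugate—forces the complement, $V\setminus P$ (resp. $V\setminus(P\cup\overline P)$), to be a conjugation-stable cyclotomic relation of weight at most $7$ that still meets both $u(\eta_1,\eta_2,\eta_3)$ and $u^{-1}(\eta_1,\eta_2,\eta_3)$; re-running the trichotomy there makes the size conditions of Cases A and B automatic and leaves only a Case C instance with a part of weight at most $3$.

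The crux is Case C, where $P$ is an indecomposable cyclotomic relation of weight at most $8$ (as $P$ and $\overline P$ are distinct, hence disjoint, parts) containing the conjugate pair $\{u(\eta_1,\eta_2,\eta_3),u^{-1}(\eta_1,\eta_2,\eta_3)\}$ but not conjugation-stable, so the reductions of Cases A and B do not apply. I would settle this using the explicit list of Theorem~\ref{T:relations of weight 8}: $\zeta^{-1}P$ is one of the tabulated relations, of weight in $\{2,3,5,6,7,8\}$ (there being no indecomposable relation of weight $1$ or $4$); for the types $R_p$ the presence of a conjugate pair forces some entry of $P$ to be real, hence $\pm1$, and—as it cannot be $u(\eta_1,\eta_2,\eta_3)^{\pm1}$—an $S$-value giving form (a), while weight $2$ gives $P=\{u(\eta_1,\eta_2,\eta_3),u^{-1}(\eta_1,\eta_2,\eta_3)\}$ and hence form (b) with $T=\emptyset$, and weight $3$ (type $R_3$) again forces the third entry to be $\pm1$; the remaining composite types of weight $6,7,8$ leave a bounded list of configurations, which I would eliminate (or reduce to form (a)/(b)) by direct computation in \SageMath. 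I expect this finite verification, together with the attendant bookkeeping—tracking which parts absorb the repeated values $u(\eta_1,\eta_2,\eta_3)^{\pm1}$, and handling coincidences among the twelve evaluations $\mu(\eta_1,\eta_2,\eta_3)$ (which is why $T$ is always taken stable under $z_i\mapsto z_i^{-1}$ and chosen with multiplicities, rather than read off literally from a part)—to be the main obstacle. The symmetry group $G$ of $g$ from \eqref{eq:symmetries of equation}, which preserves $S$ and the pair $\{u,u^{-1}\}$, can be used to cut down the number of sub-cases.
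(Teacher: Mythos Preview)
Your plan coincides with the paper's: view $g=0$ as a conjugation-stable weight-$16$ cyclotomic relation, dispose of (a), and invoke Theorem~\ref{T:relations of weight 16} plus Lemma~\ref{lem:conjugate stable relations} to obtain a conjugation-stable partition into indecomposable relations, each a genuine set. From there the paper's proof is a single sentence: the two copies of $u(\eta_1,\eta_2,\eta_3)$ lie in distinct indecomposable parts, so after grouping each part with its conjugate one obtains a conjugation-stable block of weight at most $8$ containing both $u(\eta_1,\eta_2,\eta_3)$ and $u^{-1}(\eta_1,\eta_2,\eta_3)$, which gives (b). This replaces your size-bound recursion in Cases~A and~B: rather than passing to the complement when the chosen part is too large, one simply looks at \emph{both} $u$-containing parts and takes the smaller conjugate-group. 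Your Case~C---where the two $u$-containing parts are interchanged by conjugation and hence collapse to a single group---is the one configuration not literally covered by that sentence; the paper does not explicitly flag it, whereas you propose to settle it via Theorem~\ref{T:relations of weight 8} and a finite \SageMath\ check on the composite types of weight $6$--$8$. Your proposal is thus correct and on the paper's route, more scrupulous at Case~C but heavier than necessary in Cases~A and~B.
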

\begin{proof}
We apply Theorem~\ref{T:relations of weight 16} to the cyclotomic relation of weight 16 obtained from $g$ by separating into monomials with coefficients $\pm 1$, taking $u$ and $u^{-1}$ twice each, then evaluating at $z_i = \eta_i$.
If any of the 16 terms is equal to its own conjugate, then (a) holds; we may thus assume that this does not occur.

By Theorem~\ref{T:relations of weight 16}, this cyclotomic relation admits a partition into indecomposable cyclotomic relations; by Lemma~\ref{lem:conjugate stable relations}, this partition can further be taken to be stable under complex conjugation.
If we group together conjugate pairs, the two evaluations of $u$ must end up in separate parts; hence one of these parts has weight at most 8, and so (b) holds.
\end{proof}

We finally proceed to solve the desired equation.
\begin{lemma} \label{lem:roots of unity solutions}
If $\eta_1, \eta_2, \eta_3 \in \mu$ satisfy $g(\eta_1, \eta_2, \eta_3) = 0$, then either there exists $\zeta \in \mu$ with
\begin{equation} \label{eq:one-parameter solutions}
\eta_1 = \eta_2 = \zeta, \eta_3 = 1 \mbox{\quad or \quad} \eta_1 = \eta_2 = \zeta, \eta_3 = -\zeta,
\end{equation}
or $\eta_1, \eta_2, \eta_3 \in \QQ(\mu_N)$ for $N \in \{15, 21, 24\}$; more precisely the orders of $\eta_1, \eta_2, \eta_3$ must fit one of the patterns listed in Table~\ref{table:sporadic orders}.
\end{lemma}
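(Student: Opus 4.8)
The proof is computational, following the paradigm of \cite{space-vectors}. By Lemma~\ref{L:roots of unity partition}, any $(\eta_1,\eta_2,\eta_3) \in \mu^3$ with $g(\eta_1,\eta_2,\eta_3)=0$ also satisfies $h(\eta_1,\eta_2,\eta_3)=0$ for one of an explicit finite list of auxiliary Laurent polynomials $h \in R$: either $h = \mu \pm 1$ for some $\mu \in S \cup \{u,u^{-1}\}$ (type (a)), or $h = u + u^{-1} + \sum_{\mu \in T}\mu$ for a subset $T \subseteq S$ invariant under $z_i \mapsto z_i^{-1}$ with $\#T \le 6$ (type (b)). The plan is: first, enumerate this list, using the symmetry group $G$ of \eqref{eq:symmetries of equation} — which permutes the list and acts compatibly on the set of torsion solutions of $g$ — to retain one representative per $G$-orbit; second, for each representative $h$, compute the torsion closure of the ideal $(g,h) \subseteq R$, that is, the Zariski closure of the set of torsion points on the common zero locus $V(g,h)$ in the torus, expressed as a finite union of torsion cosets, using the code accompanying \cite{space-vectors} (see \cite{space-vectors-repo} and \cite[\S 7]{space-vectors}); third, inspect the resulting cosets.

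Since every torsion solution of $g$ lies on $V(g,h)$ for some $h$ in the list, while conversely the torsion closure of each $(g,h)$ is contained in that of $V(g)$, the union over all $h$ of these torsion closures is precisely the torsion closure of $V(g)$: a finite union of torsion cosets, each either positive-dimensional or a single torsion point. For each positive-dimensional coset I would check that it is one of the two one-parameter families of \eqref{eq:one-parameter solutions}, so that the $G$-orbit of these families exhausts the positive-dimensional part. For each zero-dimensional coset $(\eta_1,\eta_2,\eta_3)$ I would read off the orders of $\eta_1,\eta_2,\eta_3$, confirm that the triple lies in $\QQ(\mu_N)$ with $N \in \{15,21,24\}$, and record its order pattern; collecting these (first up to $G$, then taking $G$-orbits) yields exactly the rows of Table~\ref{table:sporadic orders}. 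As every torsion solution of $g$ is thereby accounted for, this proves the lemma.

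A few routine points need care but introduce no new ideas. The torsion-closure computations produce only cosets of dimension at most $1$; in particular no $V(g,h)$ is two-dimensional, i.e. no $h$ in the list cuts out a subtorus contained in $V(g)$. (For the type (a) equations $h = \mu \pm 1$ this can also be seen directly, by a one-line substitution for each shape of $\mu$: $g$ does not vanish identically on $\{z_1 = 1\}$, on $\{z_1 z_2 = z_3\}$, on $\{z_1 = -z_3\}$, on $\{z_1 z_2 = \pm z_3^2\}$, and so on.) Several of the $V(g,h)$ turn out to be empty, or to contribute only points already lying on the families of \eqref{eq:one-parameter solutions}; this is harmless, since the conclusion is a disjunction. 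Finally, the enumeration in type (b) is genuinely finite: $S$ is a union of six complex-conjugate pairs of Laurent monomials with no self-conjugate element, so a conjugation-invariant $T \subseteq S$ has even cardinality, and there are at most $\binom{6}{0}+\binom{6}{1}+\binom{6}{2}+\binom{6}{3}=42$ of them, which $G$ cuts down further.

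The one real obstacle is the scale of the torsion-closure computations: for each $h$ the algorithm of \cite[\S 7]{space-vectors} works in a Laurent polynomial ring over $\ZZ$ and, while resolving positive-dimensional pieces, over various cyclotomic base fields; each individual run terminates, but there are many of them, and the output must be assembled and cross-checked against Table~\ref{table:sporadic orders}. All of this is carried out in the notebooks of \cite{repo}. The additive-relations input that makes Lemma~\ref{L:roots of unity partition}, and hence this entire reduction, available in the first place is exactly what Theorems~\ref{T:relations of weight 16} and \ref{T:lift minimal relation} supply.
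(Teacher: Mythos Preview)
Your proposal is correct and follows essentially the same approach as the paper: reduce via Lemma~\ref{L:roots of unity partition} to a finite list of auxiliary $h$, cut down by the $G$-action, compute torsion closures of the ideals $(g,h)$ using the code from \cite{space-vectors-repo}, and read off the one-parameter families and the sporadic orders. The only additional wrinkle in the paper's version is that, when forming orbit representatives, it also identifies $h$ with $g-h$ (since $(g,h)=(g,g-h)$), arriving at exactly $16$ representatives; this is a bookkeeping optimization and does not change the argument.
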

\begin{table}[ht]
\begin{tabular}{c|c|c}
Order of $\eta_1$ & Order of $\eta_2$ & Orders of $\eta_3$ \\
\hline
1 & 2 & 8 \\
1 & 4 & 24 \\
2 & 2 & 4 \\
2 & 4 & 6,12 \\
3 & 30 & 10, 15, 30 \\
4 & 4 & 3, 12 \\
6 & 7 & 21 \\
7 & 7 & 7, 14 \\
30 & 30 & 5, 6, 10, 15, 30
\end{tabular}
\caption{Orders of elements of sporadic solutions in Lemma~\ref{lem:roots of unity solutions}.}
\label{table:sporadic orders}
\end{table}
\begin{proof}
Consider the elements $h \in R$ of the forms given by Lemma~\ref{L:roots of unity partition} as the vertices of a Cayley graph for the action of the group $G$
and the generators specified above, also adding an edge from $h$ to $g-h$ when both occur as vertices. Using \SageMath{}, we compute a set $U$ of representatives of the connected components of this graph; we have $\#U = 16$. 
By Lemma~\ref{L:roots of unity partition}, our original triple $(\eta_1,\eta_2,\eta_3)$ is $G$-equivalent to one which is a zero of some $h \in U$.

For each $h \in U$, we compute the \emph{torsion closure} of the ideal $(g,h)$ of $R$ 
in the sense of \cite[\S 7]{space-vectors}, i.e., the maximal ideal of $R$
whose support contains every solution to $g(\eta_1,\eta_2,\eta_3) = h(\eta_1, \eta_2, \eta_3) = 0$ with $\eta_1, \eta_2, \eta_3 \in \mu$.
This computation in \SageMath{} (for all $h$) takes under 5 minutes on a laptop,
using the algorithm described in \cite[\S 7]{space-vectors} as implemented in \cite{space-vectors-repo}.

Of the associated primes of the resulting torsion closures, two are the one-dimensional ideals corresponding to the parametric solutions \eqref{eq:one-parameter solutions}. The rest are zero-dimensional ideals corresponding to Galois orbits of sporadic solutions in $\QQ(\mu_N)$ for $N \in \{15, 21, 24\}$; we construct Table~\ref{table:sporadic orders} by direct inspection.
\end{proof}

\begin{remark}
We sketch an alternate proof of Lemma~\ref{lem:roots of unity solutions} that can in principle be carried out by hand, although we do not do this completely here.

Suppose first that $h \in U$ arises from case (a) of Lemma~\ref{L:roots of unity partition}.
Modulo the ideal $(h,2)$ of $R$, we may write $g$ as indicated, and then deduce the indicated consequences from Remark~\ref{remark:mod 2}.
(The listed values of $h$ cover all $G$-orbits.)
\begin{center}
\begin{tabular}{c|c|c}
$h$ & $g$ modulo $(h,2)$ & Consequence \\ \hline
$z_1 \pm 1$ & $z_2 + z_2^{-1}$ & $\eta_2^4 = 1$ \\
$z_1 z_2 z_3^{-1} \pm 1$ & $z_3+ z_3^{-1}$ & $\eta_3^4 = 1$\\
$z_1 z_2 z_3^{-2} \pm 1$ & $z_1^{-2} (z_1 + z_2) (z_1 + z_2^{-1})$ &
$\eta_1^4 = \eta_2^{\pm 4}$
\end{tabular}
\end{center}
Similarly, if $h$ arises from case (b) with $T = \emptyset$, 
then $\eta_1 \eta_2 \eta_3^{-2} = \pm i$
and $g \equiv z_1^{-2} (z_1+z_2)(z_1 + z_2^{-1}) \pmod{h, 1+i}$, so by Remark~\ref{remark:mod 2} we have $\eta_1^8 = \eta_2^{\pm 8}$.
With further calculation, this yields the parametric solutions \eqref{eq:one-parameter solutions} plus solutions with $\eta_1,\eta_2,\eta_3 \in \QQ(\mu_N)$ for $N = 24$.

Suppose next that $h \in U$ arises from case (b) of Lemma~\ref{L:roots of unity partition}. We have already treated the case $\#T = 0$.
If $\#T = 2$, then $T = \{\mu, \mu^{-1}\}$ for some $\mu \in S$;
the cases $\mu = z_2, -z_2 z_3^{-1}$ cover all $G$-orbits.
Since Table~\ref{table:minimal relations} contains no entries with $n=4$,
either $u + \mu$ or $u + \mu^{-1}$ evaluates to 0.
With further calculation, this yields the parametric solutions \eqref{eq:one-parameter solutions} plus solutions with $\eta_1,\eta_2,\eta_3 \in \QQ(\mu_N)$ for $N \in \{7, 12, 15\}$.

If $\#T = 4$, then up to $G$-action we may take $T$ to be one of four options. One of these is
\[
T = \{(-z_1 z_3^{-1})^{\pm 1}, (-z_2 z_3^{-1})^{\pm 1}\},
\]
in which we have the algebraic relation $(-z_1 z_3^{-1})(-z_2 z_3^{-1}) = -u$.
In this case, we first solve the equation $h(\eta_1,\eta_2,\eta_3) = 0$ in roots of unity; this yields solutions with $\eta_1 \eta_3^{-1}, \eta_2 \eta_3^{-1} \in \QQ(\mu_N)$ with $N = 24$. We then substitute into $g$ and solve further to confirm that $\eta_1,\eta_2,\eta_3 \in \QQ(\mu_N)$.

The remaining options for $T$ with $\#T = 4$ are
\[
\{z_1^{\pm 1}, z_2^{\pm 1}\},
\{z_2^{\pm 1}, (-z_2 z_3^{-1})^{\pm 1}\},
\{z_3^{\pm 1}, (-z_2 z_3^{-1})^{\pm 1}\}.
\]
By Lemma~\ref{lem:conjugate stable relations}, either $h$ corresponds to the unique sequence listed in Table~\ref{table:minimal relations} with $n=6$ rotated by $\pm 1$,
or $h$ splits into two conjugate relations of type $R_3$.
In the first case, we must have $\eta_1, \eta_2, \eta_3 \in \QQ(\mu_N)$
for $N = 15$ without further calculation. (When $T = \{z_1^{\pm 1}, z_2^{\pm 1}\}$ solving for $\eta_3$ requires a square root, but ends up not forcing an increase in $N$.)
In the second case, $u$ equals a cube root of unity times one of
$z_2^{\pm 1}, (-z_2 z_3^{-1})^{\pm 1}$;
with further calculation, this yields solutions with $\eta_1,\eta_2,\eta_3 \in \QQ(\mu_N)$ for $N \in \{15, 24\}$.

If
$\#T = 6$, then both $h$ and $g-h$ correspond to sequences listed in 
Table~\ref{table:minimal relations} rotated by $\pm 1$. 
(Note that $h$ cannot be the sum of two disjoint conjugate minimal sequences because Table~\ref{table:minimal relations} contains no entries with $n=4$.)
In particular,
since $z_1, z_2, z_3$ each appear in either $h$ or $g-h$, we must have
$\eta_1, \eta_2, \eta_3 \in \QQ(\mu_N)$ for $N \in \{15, 21\}$ without further calculation.
\end{remark}

\section{Geometric decompositions}
\label{sec:geometric decompositions}

With Lemma~\ref{lem:roots of unity solutions} in hand,
we now proceed to the proof of Theorem~\ref{T:main} and Corollary~\ref{C:main}.

\begin{proof}[Proof of Theorem~\ref{T:main}]
As noted earlier, the LMFDB asserts this result in all cases where $\dim(A) \leq 6$, so we may ignore those cases in what follows.
Since the cases $n = 1,2,7,30$ are excluded by our dimension bound, 
Theorem~\ref{T:madan-pal} implies that the real Weil polynomial associated to $A$ equals $P_n(3-x)$. 

Suppose that $\alpha_1, \alpha_2$ are distinct Frobenius eigenvalues of $A$ such that
$\eta_3 = \alpha_1/\alpha_2$ is a root of unity. By Lemma~\ref{lem:roots of unity ratio}, the roots of unity $\eta_1, \eta_2$ of order $n$ corresponding to $\alpha_1, \alpha_2$
via \eqref{eq:MP eigenvalue} satisfy $g(\eta_1, \eta_2, \eta_3) = 0$.
Note that each of the exceptional cases listed in Table~\ref{table:sporadic orders} includes a now-excluded value of $n$, and that the first family listed in \eqref{eq:one-parameter solutions} is inconsistent with the hypothesis that $\alpha_1 \neq \alpha_2$.
Consequently,  Lemma~\ref{lem:roots of unity solutions} implies that $\eta_1, \eta_2, \eta_3$ must belong to the second family listed in \eqref{eq:one-parameter solutions}; that is, $\alpha_1$ and $\alpha_2$ are the two roots of \eqref{eq:MP eigenvalue} for some common value of $\eta$. 

From the previous calculation, it follows that for any positive integer $m$, the Weil polynomial associated to $A_{\FF_{2^m}}$ is either irreducible or the square of an irreducible Weil polynomial. Moreover, the latter outcome occurs when $m=n$:
for each root of unity $\eta$ of order $n$, both roots of \eqref{eq:MP eigenvalue} occur as Frobenius eigenvalues of $A_n$ and their ratio is $-\eta$.
We deduce that the Weil polynomial associated to $A_{\FF_{q^n}}$ is the square of an irreducible Weil polynomial $Q(x)$, and moreover no two of the roots of $Q(x)$ have ratio equal to a nontrivial root of unity.

If $n$ is not a power of 2, then $A$ is ordinary by Lemma~\ref{lem:Newton polygon}, as then is $A_{\FF_{q^n}}$. Hence Corollary~\ref{cor:Honda-Tate ordinary} implies that $Q(x)$ is the Weil polynomial associated to an abelian variety over $\FF_{q^n}$,
so we may apply Lemma~\ref{lem:geometrically simple criterion} to deduce that $A_{\overline{\FF}_q}$ is isogenous to the square of an abelian variety.

If $n$ is a power of 2, then we may use our dimension bound to reduce to the case $n \geq 8$. 
Lemma~\ref{lem:Newton polygon} implies that the slopes of the Newton polygon of $A$ are all equal to $2/n$ or $1-2/n$;
we may thus apply Lemma~\ref{lem:geometrically simple by Newton polygon} to deduce that $A$ is geometrically simple.
\end{proof}

\begin{proof}[Proof of Corollary~\ref{C:main}]
By Theorem~\ref{T:madan-pal}, there exists a unique integer $n_i$ such that $B_i$ is an isogeny factor of $A_{n_i}$.
Let $\alpha_i$ be a Frobenius eigenvalue of $B_i$.
By Theorem~\ref{T:Honda-Tate}, 
$\Hom(B_{1,\overline{\FF}_q}, B_{2,\overline{\FF}_q}) \neq 0$
if and only if some conjugate of $\alpha_2$ equals $\alpha_1$ times a root of unity. Using this criterion, it is straightforward to verify the ``if'' implication.

To check the ``only if'' implication, we may assume that
$B_1$ and $B_2$ are not isogenous and that $\alpha_1/\alpha_2$ is a root of unity.
If $n_1 = n_2$, then by Theorem~\ref{T:madan-pal} the common value is in $\{7, 30\}$. If $n_1 \neq n_2$, we may choose a root of unity $\eta_i$ of order $n_i$ associated to $\alpha_i$ via Lemma~\ref{lem:MP eigenvalue}; by Lemma~\ref{lem:roots of unity solutions}, $n_1$ and $n_2$ must appear as the first two entries in some row of Table~\ref{table:sporadic orders}.
\end{proof}

\section{Additional properties}
\label{sec:additional properties}

In \cite{vanbommel-etc}, it is shown for any fixed $q$, any sufficiently large positive integer is the order of some abelian variety over $\FF_q$ which is simultaneously ordinary, geometrically simple, and principally polarizable. It is thus natural to ask how these conditions interact for simple abelian varieties of order 1 over $\FF_2$.

We first consider the combination of the ordinary and geometrically simple conditions. From the proof of Theorem~\ref{T:main} and our calculation of Newton polygons (Lemma~\ref{lem:Newton polygon}), we obtain the following.
\begin{corollary}
Let $A$ be a simple abelian variety over $\FF_2$ of order $1$.
Then $A$ cannot be both ordinary and geometrically simple.
In addition, if $\dim(A) \geq 3$, then $A$ is either ordinary or geometrically simple (but not both).
\end{corollary}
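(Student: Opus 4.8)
The plan is to combine the trichotomy from Theorem~\ref{T:main} with the Newton polygon computation of Lemma~\ref{lem:Newton polygon}, splitting on whether $n$ (the unique index with $A$ an isogeny factor of $A_n$) is a power of $2$. First I would dispose of the claim that $A$ cannot be \emph{both} ordinary and geometrically simple. By Lemma~\ref{lem:Newton polygon}(a), $A$ is ordinary precisely when $n$ is not a power of $2$; but then Theorem~\ref{T:main} gives $f \geq 2$ (the value is $2$ for $n \neq 7,30$, and $3$ or $4$ for $n = 7, 30$), so $A_{\overline{\FF}_2}$ is isogenous to $B^f$ with $f > 1$ and hence is not simple. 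Conversely, if $n$ is a power of $2$, then $A$ is not ordinary by Lemma~\ref{lem:Newton polygon}(b): for $n \geq 4$ all slopes equal $1/2^m$ or $1-1/2^m$ with $m \geq 2$, hence none are $0$ or $1$; for $n = 1, 2$ one checks directly (these are the low-dimensional cases $\dim A \leq 1$, visible in the LMFDB, and the slopes are $1/2$). So no simple abelian variety over $\FF_2$ of order $1$ is simultaneously ordinary and geometrically simple.

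For the second assertion, assume $\dim(A) \geq 3$ and suppose $A$ is \emph{not} ordinary; I must show $A$ is geometrically simple. Non-ordinary forces $n$ to be a power of $2$ by Lemma~\ref{lem:Newton polygon}(a). Since $\dim(A) \geq 3$ rules out $n = 1, 2$ (degree too small) and $n = 4$ (here $\dim A = 2$, as $\phi(4) = 2$), we have $n = 2^m$ with $m \geq 3$, so $n \geq 8$. Then Lemma~\ref{lem:Newton polygon}(b) says the Newton polygon of $A$ has all slopes equal to $1/g$ or $1 - 1/g$ with $g = 2^{m-1} = n/2 = \dim(A) > 2$, each with multiplicity $g$, and Lemma~\ref{lem:geometrically simple by Newton polygon} applies directly to give that $A$ is geometrically simple. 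Combined with the first part (which shows it is then not ordinary — indeed we assumed that), this gives the exclusive "or".

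The only real subtlety is matching the parameter $m$ in Lemma~\ref{lem:Newton polygon}(b) with the hypotheses of Lemma~\ref{lem:geometrically simple by Newton polygon}: one must confirm that for $n = 2^m$ with $m \geq 3$ the relevant slope denominator $2^m$ equals $\dim(A)$, which holds because $\dim(A) = \tfrac{1}{2}\deg P_n(3-x) = \tfrac{1}{2}\phi(2^m) = 2^{m-2}$\,— wait, this needs care: with $m' = \max\{1, \log_2 n - 1\} = m - 1$ the slopes are $1/2^{m-1}$, and $\dim(A) = 2^{m-2}$, so in fact the slope denominator is $2^{m-1} = 2\dim(A)$, not $\dim(A)$. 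The clean resolution is that Lemma~\ref{lem:geometrically simple by Newton polygon} is being invoked through Theorem~\ref{T:main}: the proof of Theorem~\ref{T:main} already establishes geometric simplicity for $n = 2^m$, $m \geq 3$, so I would simply cite Theorem~\ref{T:main} for the geometric simplicity (reading off $f = 1$) rather than reinvoking Lemma~\ref{lem:geometrically simple by Newton polygon}, and verify only the low-dimensional cases $n \in \{1,2,4\}$ against the LMFDB. This bookkeeping — tracking which $n$ give $\dim A \leq 2$ versus $\geq 3$, and which branch of Theorem~\ref{T:main} supplies the value of $f$ — is the main thing to get right; the mathematics is otherwise immediate.
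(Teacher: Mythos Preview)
Your proposal is correct and matches the paper's approach exactly: the paper states this result as an immediate corollary of Theorem~\ref{T:main} and Lemma~\ref{lem:Newton polygon}, with no separate proof, and that is precisely what you do.

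One small correction to your detour: the real Weil polynomial has degree equal to the dimension (not twice the dimension), so for $n = 2^m$ with $m \geq 3$ you have $\dim(A) = \deg P_n = \phi(2^m) = 2^{m-1}$, which \emph{does} match the slope denominator $2^{m-1}$ from Lemma~\ref{lem:Newton polygon}(b). Thus your original plan of invoking Lemma~\ref{lem:geometrically simple by Newton polygon} directly would in fact have worked; the apparent mismatch was just an arithmetic slip. Either route (direct via Lemma~\ref{lem:geometrically simple by Newton polygon}, or reading $f=1$ off Theorem~\ref{T:main}) is fine, and indeed the proof of Theorem~\ref{T:main} uses exactly this Newton-polygon argument for $n$ a power of $2$ with $n \geq 8$.
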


As for the principally polarizable condition, we have the following partial result.
\begin{theorem}
Let $n$ be a power of an odd prime $p$. Then $A_n$ is isogenous to a principally polarizable abelian variety.
\end{theorem}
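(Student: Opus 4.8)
The plan is to reduce the statement to Howe's criterion \cite{howe-pp} for when an ordinary isogeny class over a finite field contains a principally polarizable member, and then to verify that the resulting obstruction vanishes for $A_n$. Since $n=p^k$ with $p$ odd, $n$ is not a power of $2$, so $A_n$ is ordinary by Lemma~\ref{lem:Newton polygon}(a); the cases $n\in\{3,5,7,9\}$ have $\dim A_n\leq 6$ and may be settled by consulting the LMFDB as elsewhere in this paper, so assume $n\geq 11$. Then $A_n$ is simple with irreducible Weil polynomial (Corollary~\ref{cor:Honda-Tate ordinary}); let $\pi$ be a Frobenius eigenvalue, $K=\QQ(\pi)$ the associated CM field, and $K^+$ its maximal totally real subfield. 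By Lemma~\ref{lem:MP eigenvalue}, $\pi^2+(\eta-1)\pi-2\eta=0$ for a primitive $n$-th root of unity $\eta$, so $\QQ(\zeta_n)=\QQ(\eta)\subseteq K$ with $[K:\QQ(\zeta_n)]=2$; complex conjugation on $K$ restricts to complex conjugation on $\QQ(\zeta_n)$ (the conjugate root satisfies the same equation with $\eta$ replaced by $\eta^{-1}$), so $K=K^+\cdot\QQ(\zeta_n)$.

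The key preliminary step is a ramification analysis of $K/K^+$. One has $K=K^+(\sqrt{d})$ with $d=(\zeta_n-\zeta_n^{-1})^2$, and moreover $d$ lies in $\QQ(\zeta_n)^+\subseteq K^+$ with $\QQ(\zeta_n)^+(\sqrt{d})=\QQ(\zeta_n)$, so $K/K^+$ is obtained from $\QQ(\zeta_n)/\QQ(\zeta_n)^+$ by base change. Because $n$ is a power of the odd prime $p$, the cyclotomic field $\QQ(\zeta_n)$ is ramified over $\QQ$ only at $p$ and $\infty$, hence so are $\QQ(\zeta_n)/\QQ(\zeta_n)^+$ and $K/K^+$. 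Thus the only finite primes of $K^+$ ramified in $K$ lie above $p$; at the primes above $2$ the extension $K/K^+$ is unramified (consistent with $A_n$ being ordinary).

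It then remains to evaluate Howe's obstruction, which is built from local conditions at the finite places of $K^+$ that ramify in $K$ together with a positivity condition at the real places. At a prime $\mathfrak{q}$ of $K^+$ above $p$ the local requirement can be phrased as asking that $q=2$ be a square in the residue field of $\mathfrak{q}$ (equivalently, a norm from the local extension), and this always holds: writing $K^+=\QQ(\zeta_n)^+(\sqrt{c^2-4})$ with $c=4+\zeta_n+\zeta_n^{-1}$, one has $c\equiv 6$, hence $c^2-4\equiv 32$, modulo the prime $\mathfrak{q}_0$ of $\QQ(\zeta_n)^+$ above $p$ (which is totally ramified, with residue field $\FF_p$); so if $2$ is a non-square modulo $p$ then $\mathfrak{q}_0$ is inert in $K^+$, the residue field of $\mathfrak{q}$ is $\FF_{p^2}$, and every element of $\FF_p^\times$ is a square there, while if $2$ is a square modulo $p$ the condition is immediate. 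For the archimedean part one exploits the freedom in the choice of CM type, together with the cyclotomic units of $\QQ(\zeta_n)^+$---available precisely because $n$ is a prime power---to arrange the positivity required by a principal polarization. Hence the obstruction vanishes, and the isogeny class of $A_n$ contains a principally polarizable abelian variety.

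I expect the main obstacle to be carrying Howe's criterion through precisely in this setting: one must identify the local term attached to each prime of $K^+$ above $p$, where the three fields $\QQ(\zeta_n)$, $K^+$, $K$ are all heavily ramified---already for $n=p=3$ one has $K^+=\QQ(\sqrt{5})$, $K=\QQ(\sqrt{5},\sqrt{-3})$ with $3$ inert in $K^+$ but ramified in $K/K^+$---and one must check that the archimedean positivity can be realized simultaneously at all $\phi(n)$ real places of $K^+$. The ramification computation and the case analysis modulo $8$ are then routine.
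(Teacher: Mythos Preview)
Your field-theoretic setup is correct and you have the decisive computation in hand, but you then take an unnecessary and incomplete detour. The form of Howe's criterion you attempt to verify---local norm conditions at the ramified primes together with an archimedean positivity condition---is not what is needed here, and neither half is actually carried out: the assertion that ``the local requirement can be phrased as asking that $q=2$ be a square in the residue field'' is not justified (and is not the content of \cite[Theorem~1.1]{howe}), while the archimedean step via ``freedom in the choice of CM type'' and cyclotomic units is left as pure hand-waving, as you yourself concede in your final paragraph. As written, the argument is not a proof.

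The paper's route is much shorter, and your own computation already supplies its key ingredient. The statement of \cite[Theorem~1.1]{howe} being invoked says only that if some \emph{finite} prime of $K^+$ ramifies in $K$, then the isogeny class contains a principally polarizable member; no local norm computations or sign conditions are required. You have $K^+ = \QQ(\zeta_n)^+(\sqrt{c^2-4})$ with $c^2-4 \equiv 32 \pmod{\mathfrak{q}_0}$; since $p$ is odd, $32$ is a unit at $\mathfrak{q}_0$, so $\mathfrak{q}_0$ is \emph{unramified} in $K^+$. On the other hand $\mathfrak{q}_0$ ramifies in $\QQ(\zeta_n)$, because $p$ is totally ramified in both $\QQ(\zeta_n)$ and $\QQ(\zeta_n)^+$ and hence the relative ramification index equals $[\QQ(\zeta_n):\QQ(\zeta_n)^+]=2$. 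Comparing ramification indices in the compositum $K = K^+ \cdot \QQ(\zeta_n)$, every prime of $K^+$ above $\mathfrak{q}_0$ must ramify in $K$, and Howe's theorem then applies directly. (In the paper's notation $\xi = \eta+\eta^{-1}$ and the discriminant is $(\xi+2)(\xi+6)$, which is exactly your $c^2-4$.) There is no need to analyze residue fields, local norms, or the archimedean places at all.
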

\begin{proof}
Let $\alpha$ be a Frobenius eigenvalue of $A_n$
and choose $\eta \in \mu$ of order $n$ for which \eqref{eq:MP eigenvalue}  holds;
then $\eta \in \QQ(\alpha)$. Define
\[
\beta = \alpha + 2\alpha^{-1}, \qquad \xi = \eta + \eta^{-1};
\]
we derive from \eqref{eq:MP eigenvalue} the equation
\[
\beta^2 + (\xi-2) \beta - (2+3\xi) = 0,
\]
which as a quadratic equation in $\beta$ has discriminant
\[
(\xi-2)^2 + 4(2+3\xi) = (\xi + 2)(\xi + 6).
\]
We now have the following field diagram:
\[
\xymatrix{
& \QQ(\alpha) \ar@{-}[ld] \ar@{-}_2[rd] \\
\QQ(\eta) \ar@{-}^2[rd] & & \QQ(\beta) \ar@{-}[ld] \\
& \QQ(\xi).
}
\]
Let $v$ be a prime of $\QQ(\xi)$ lying above $p$; then modulo $v$, $\xi$ is congruent to 2 and so $(\xi+2)(\xi+6)$ is congruent to 32. As a result, $v$ does not ramify in $\QQ(\beta)$;
since $v$ does ramify in $\QQ(\eta)$, every prime of $\QQ(\beta)$ above $v$ ramifies in $\QQ(\alpha)$.
We may thus apply \cite[Theorem~1.1]{howe} to deduce 
that $A_n$ is isogenous to a principally polarizable abelian variety.
\end{proof}

\begin{remark} \label{rmk:LMFDB principally polarizable}
For positive integers $n$ such that the simple isogeny factors of $A_n$ are of dimension at most 6, LMFDB reports the following as to whether or not these factors are isogenous to a principally polarizable abelian variety.
\begin{itemize}
\item Yes: $n=1, 2, 3, 4, 5, 7, 9$.
\item No: $n=6, 10, 12, 14, 18, 30$.
\item Unknown: $n=8$.
\end{itemize}
It may be possible to collect more data using the methods of \cite{marseglia} and \cite{bergstrom-karemaker-marseglia}, especially when $A_n$ is ordinary (which by Lemma~\ref{lem:Newton polygon} occurs when $n$ is not a power of 2).
\end{remark}


\begin{thebibliography}{99}
\bibitem{bergstrom-karemaker-marseglia}
J. Bergstr\"om, V. Karemaker, and S. Marseglia,
Polarizations of abelian varieties over finite fields via canonical liftings, \arXiv{2101.05531}{3}{2021}.

\bibitem{vanbommel-etc}
R. van Bommel, E. Costa, W. Li, B. Poonen, and A. Smith, Abelian varieties of prescribed order over finite fields, \arXiv{2106.13651}{1}{2021}.

\bibitem{relations}
L. Christie, K.J. Dykema, and I. Klep, Classifying minimal vanishing sums of roots of unity, \arXiv{2008.11268}{1}{2020}.

\bibitem{conway-jones}
J.H. Conway and A.J. Jones, Trigonometric Diophantine equations (On vanishing sums of roots of unity), \textit{Acta Arith.} \textbf{30} (1976), 229--240.

\bibitem{repo}
T. D'Nelly-Warady and K.S. Kedlaya, GitHub repository
\url{https://github.com/mwarady22/Geometric-decomposition-of-abelian-varieties-of-order-1}.

\bibitem{lmfdb-av}
T. Dupuy, K.S. Kedlaya, D. Roe, and C. Vincent, Isogeny classes of abelian varieties over finite fields in the LMFDB, in \textit{Arithmetic Geometry, Number Theory, and Computation}, Simons Symposia, Springer, 2022, 375--448.

\bibitem{howe}
E.W. Howe,
Kernels of polarizations of abelian varieties over finite fields,
\textit{J. Algebraic Geom.} \textbf{5} (1996), 583--608.

\bibitem{janusz}
G.J. Janusz, \textit{Algebraic Number Fields}, second edition, Graduate Studies in Math. 7,
Amer. Math. Soc., Providence, 1996.

\bibitem{space-vectors}
K.S. Kedlaya, A. Kolpakov, B. Poonen, and M. Rubinstein,
Space vectors forming rational angles, \arXiv{2011.14232}{1}{2020}.

\bibitem{space-vectors-repo}
K.S. Kedlaya, A. Kolpakov, B. Poonen, and M. Rubinstein,
GitHub repository \url{https://github.com/kedlaya/tetrahedra/}.

\bibitem{lmfdb}
The LMFDB Collaboration, L-Functions and Modular Forms Database, \url{https://lmfdb.org} (retrieved July 2022).

\bibitem{madan-pal}
M.L. Madan and S. Pal, 
Abelian varieties and a conjecture of R. M. Robinson, \textit{J. reine angew. Math.} \textbf{291} (1977), 78--91.

\bibitem{manin}
Yu.I. Manin, The theory of commutative formal groups over fields of finite characteristic [Russian], \textit{Usp. Math.} \textbf{18} (1963), 3--90; English translation,
\textit{Russ. Math. Surveys} \textbf{18} (1963), 1--80.

\bibitem{marseglia}
S. Marseglia, Computing square-free polarized abelian varieties over finite fields, \textit{Math. Comp.} \textbf{90} (2021), 953--971.

\bibitem{poonen-rubinstein}
B. Poonen and M. Rubinstein,
The number of intersection points made by the diagonals of a regular polygon,
\textit{SIAM J. Discrete Math.} \textbf{11} (1998), 135--156.

\bibitem{robinson}
R.M. Robinson, Conjugate algebraic units in a special interval, \textit{Math. Z.} \textbf{154} (1977), 31--40.

\bibitem{sage}
The Sage Developers, SageMath version 9.6, 2022, \url{https://www.sagemath.org}.

\bibitem{waterhouse-milne}
W.C. Waterhouse and J.S. Milne, Abelian varieties over finite fields, 1969 Number Theory Institute (Proc. Sympos. Pure Math., Vol. XX, State Univ. New York, Stony Brook, N.Y., 1969), pp. 53--64. Amer. Math. Soc., Providence, R.I., 1971.


\bibitem{wlodarski}
\L. W\l odarski, On the equation $\cos \alpha_{1}+\cos \alpha_{2}+\cos \alpha_{3}+\cos \alpha_{4}=0$,         \textit{Ann. Univ. Sci. Budapest. E\"{o}tv\"{o}s Sect. Math.} \textbf{12} (1969), 147--155.

\end{thebibliography}
\end{document}